\newcommand{\tabcaption}{\def\@captype{table}\caption}
\newtheorem{lem}{Lemma}[section]
\newtheorem{thm}{Theorem}[section]
\newtheorem{propo}{Proposition}[section]
\newtheorem{rem}{Remark}[section]
\newtheorem{assum}{Assumption}[section]
\newcommand{\lam}{\lambda}
\newcommand{\verti}[1]{|#1|}
\newcommand{\vertiii}[1]{|#1|}
\newcommand{\we}{\widetilde}
\newcommand{\norm}[1]{\left\lVert#1\right\rVert}
\newcommand{\bndint}[1]{\sum_{T\in\mathcal T_h}\langle #1 \rangle_{\partial T}}
\newcommand{\bint}[2]{\langle #1 \rangle_{#2}}
\newcommand{\inpro}[1]{\langle #1 \rangle_h}
\newcommand{\ninpro}[2]{\langle #1 \rangle_{#2}}
\newcommand{\bmnabla}{\bm\nabla}
\newcommand{\bmsigma}{\bm\sigma}
\newcommand{\bmsiglam}{\bm\sigma_{\lambda}}
\numberwithin{equation}{section}
\begin{document}
\title
{
  \Large\bf BPX preconditioner for nonstandard finite element methods for diffusion problems
  \thanks{This work was supported by National Natural Science Foundation of China (11171239) and  Major Research
    Plan of National Natural Science Foundation of China (91430105).}
}

\author
{
  Binjie Li\thanks{Email: libinjiefem@yahoo.com}, \quad Xiaoping Xie \thanks{Corresponding author.
    Email: xpxie@scu.edu.cn}\\
  {School of Mathematics, Sichuan University, Chengdu 610064, China}
}

\date{}
\maketitle
\begin{abstract}
  This paper proposes and analyzes an optimal preconditioner for a general linear symmetric positive
  definite (SPD) system by following the basic idea of the well-known BPX framework. The SPD system
  arises from a large number of nonstandard finite element methods for diffusion problems,
  including the well-known hybridized Raviart-Thomas (RT) and Brezzi-Douglas-Marini (BDM) mixed element
  methods, the hybridized discontinuous Galerkin (HDG) method, the Weak Galerkin (WG) method, and the
  nonconforming Crouzeix-Raviart (CR) element method. We prove that the presented preconditioner is
  optimal, in the sense that the condition number of the preconditioned system is independent of the
  mesh size. Numerical experiments are provided to confirm the theoretical results.

  \vskip 0.4cm {\bf Keywords.}
  BPX preconditioner,  RT element, BDM element, HDG method, WG method, nonconforming CR element
\end{abstract}

\section{Introduction}
This paper is to design an efficient preconditioner for a large class of nonstandard finite element
methods for solving the diffusion model
\begin{equation}\label{eq:model}
  \left\{
    \begin{array}{rcll}
      -\text{div}(\bm A\bmnabla u)\!\! &\!\!=\!\!&\!\! f &\text{ in $\Omega$,}\\
      u\!\! &\!\!=\!\!&\!\! g &\text{ on $\partial\Omega$,}
    \end{array}
  \right.
\end{equation}
where $\Omega\subset \mathbb R^d$ ($d$ = 2, 3) is a bounded polyhedral domain, the diffusion tensor
$
\bm A:\Omega\to\mathbb R^{d\times d}
$
is a matrix function that is assumed to be symmetric and uniformly positive definite,
$f\in L^2(\Omega)$ and $g\in H^{\frac{1}{2}}(\partial\Omega)$.

Let $\mathcal T_h$ be a triangulation of $\Omega$, and $\mathcal F_h$ be the set of all faces of
$\mathcal T_h$. We introduce a finite dimensional space
\begin{equation}
  \mathbb M_{h,k}:=\{\mu_h\in L^2(\cup_{F\in\mathcal F_h}F):\mu_h|_F\in P_k(F)
  ~\text{ for all $F\in\mathcal F_h$ and $\mu_h|_{\partial\Omega}=0$}\},
\end{equation}
with $P_k(F)$ denoting the set of polynomials of degree $\leqslant k$ on $F$.  Consider
the following general symmetric and positive definite (SPD) system for equation \eqref{eq:model}:
Seek $\lam_h\in \mathbb M_{h,k}$ such that
\begin{equation}\label{eq:sys}
  d_h(\lam_h,\mu_h) = b(\mu_h)~\text{ for all $\mu_h\in \mathbb M_{h,k}$}.
\end{equation}
Here
$d_h(\cdot,\cdot):\mathbb M_{h,k}\times \mathbb M_{h,k}\to\mathbb R$ is an inner-product on
$\mathbb M_{h,k}$ and $b_h(\cdot):\mathbb M_{h,k}\to\mathbb R$ is a linear functional on
$\mathbb M_{h,k}$.

The first class of nonstandard finite element methods that fall into the framework \eqref{eq:sys}
are hybrid or hybridized finite element methods (\cite{Babuska-Oden-Lee1977, Oden-Lee1977,Raviart-Thomas1977,
  Raviart-Thomas1979, ArnoldBrezzi1985, BrezziDouglasMarini1985,COCKBURN_2004,COCKBURN_2005, super_con_LDG,
  unified_dg,unified_hdg, projection_based_hdg,Li-Xie2014}). Due to the relaxation of the constraint of
continuity at the inter-element boundaries by introducing some Lagrange multipliers, the corresponding
hybrid method allows for piecewise-independent approximation to the potential or flux solution. Thus,
after local elimination of unknowns defined in the interior of elements, the method leads to a SPD
discrete system of the form \eqref{eq:sys}, where the unknowns are only the globally coupled degrees
of freedom describing the Lagrange multiplier. In \cite{ArnoldBrezzi1985, BrezziDouglasMarini1985},
the Raviart-Thomas (RT) \cite{RT} and Brezzi-Douglas-Marini (BDM) mixed methods were shown to have equivalent
hybridized versions. A new characterization of the approximate solution of hybridized mixed methods was
developed and applied in \cite{COCKBURN_2004} to obtain an explicit formula for the entries of the matrix
equation for the Lagrange multiplier unknowns.  An overview of some new hybridization techniques was presented
in \cite{COCKBURN_2005}. In \cite{unified_hdg} a unifying framework for hybridization of finite element methods
was developed.  Error estimates of some hybridized discontinuous Galerkin (HDG) methods were derived in
\cite{super_con_LDG,projection_based_hdg,Li-Xie2014}.
\par
The weak Galerkin (WG) method  \cite{WangYe2013, Mu-Wang-Ye1,Mu-Wang-Wang-Ye} is the second class of
nonstandard approach that applies to the framework \eqref{eq:sys}. The WG method is designed by
using a weakly defined gradient operator over functions with discontinuity, and allows the use of totally
discontinuous functions in the finite element procedure. The concept of weak gradients provides a systematic
framework for dealing with discontinuous functions defined on elements and their boundaries in a near classical
sense \cite{WangYe2013}. Similar to the hybrid methods, the WG scheme can be reduced to the form \eqref{eq:sys}
after local elimination of unknowns defined in the interior of elements. We note that when $\bm A$ in
(\ref{eq:model}) is a piecewise-constant matrix,  the WG method is, by introducing the discrete weak
gradient as an independent variable, equivalent to the hybridized version of the RT or BDM mixed methods.
For the discretization of the diffusion model \eqref{eq:model} on simplicial 2D or 3D meshes, we refer
to \cite{Li;Xie;2014;WG} for a multigrid WG algorithm,  and to \cite{Chen-Wang-Wang-Ye2014} for an auxiliary
space multigrid preconditioner for the WG method as well as a reduced system of the weak Galerkin method
involving only the degrees of freedom on edges/faces.
\par
Besides, some noncnforming methods, e.g. the nonconforming Crouzeix-Raviart element method \cite{CR}, can
also lead to a SPD discrete system of the form \eqref{eq:sys}. To this end, one needs to introduce a
special projection of the flux solution to the element boundaries as the trace approximation.  We refer
to \cite{mg_p1,Braess;Verfurth1990,Brenner.S1992,ARBOGAST;Chen1995,Lazarov;2003,Rahman;2005,Kraus;2008,Wang;Chen;2012,Zhu.Y2014}
for multigrid algorithms or preconditioning for the CR or CR-related nonconforming finite element methods.
In particular, in \cite{Brenner.S1992}, an optimal-order multigrid method was proposed and analyzed for
the lowest-order Raviart-Thomas mixed element based on the equivalence between Raviart-Thomas mixed methods
and certain nonconforming methods.

\par
As far as we know, the first preconditioner for the system \eqref{eq:sys} was developed in \cite{schwarz_pre},
where a Schwarz preconditioner was designed for the hybridized RT and BDM mixed element methods.
In \cite{G_2009} a convergent V-cycle multigrid method was proposed for the hybridized mixed methods for
Poisson problems with full elliptic regularity. By following the idea of \cite{G_2009}, a non-nested multigrid
V-cycle algorithm, with a single smoothing step per level, was analyzed in \cite{multigrid_hdg} for the system
\eqref{eq:sys} arising from one type of HDG method, where   only a weak  elliptic regularity is   required. In \cite{Li;Xie;2015;HDG},
a general framework for designing fast solvers for the system \eqref{eq:sys} was presented without any regularity assumption.
\par
It is well known that the BPX multigrid framework, developed by Bramble, Pasciak and Xu \cite{BPX}, is widely
used in the analysis of multigrid and domain decomposition methods. We refer to \cite{Bramble1993,BrambleKwakPa1994,
  BramblePa1993,BrambleZhang2001,Duan;Gao;Tan;Zhang2007,GoKan2003,Wang1993, xu_2002, xu_2009,XuZhu2007,XuChen2001}
for the development and applications of the BPX framework. In \cite{Xu;1996} an abstract framework of auxiliary
space method was proposed and an optimal multigrid technique was developed for general unstructured grids. Especially,
in \cite{xu_2009} an overview of multilevel methods, such as V-cycle multigrid and BPX preconditioner, was given
for solving various partial differential equations on quasi-uniform meshes,  and the methods were extended to graded
meshes and completely unstructured grids.
\par
In this paper, we shall follow the basic ideas of  (\cite{BPX}, \cite{Xu;1996}, \cite{xu_2009}) to construct a
BPX preconditioner for the system \eqref{eq:sys}, which is, due to the definition of the discrete space
$\mathbb M_{h,k}$, corresponding to nonnested multilevel finite element spaces. We will show the proposed
preconditioner is optimal.
\par
We arrange the rest of the paper as follows. Section 2 introduces some notation and preliminaries.  Section 3 introduces and analyzes a general auxiliary space preconditioner.  Section 4 constructs
the BPX preconditioner and derives the condition number estimation of the preconditioned system. Section 5 shows some applications of the
proposed preconditioner. Finally, Section 6 provides some numerical results.
\section{ Notations and preliminaries}
Throughout this paper, we use the standard definitions of Sobolev spaces and their norms and semi-norms (cf. \cite{ADAMS}),
namely for an arbitrary open set $D\subset\mathbb R^d$ and any nonnegative integer $s$,
\begin{displaymath}
  \begin{array}{rcl}
    H^s(D) &:=& \{v\in L^2(D): \partial^{\alpha}v \in L^2(D), \forall |\alpha| \leqslant s\},\\
    \norm{v}_{s,D} &:=& (\sum_{|\alpha|\leqslant s}\int_{D}|\partial^{\alpha}v|^2)^{\frac{1}{2}},
    \quad |{v}|_{s,D} := (\sum_{|\alpha|=s}\int_{D}|\partial^{\alpha}v|^2)^{\frac{1}{2}}.
  \end{array}
\end{displaymath}
We denote respectively by $(\cdot,\cdot)_D$ and $\bint{\cdot,\cdot}{\partial D}$  the $L^2$
inner products on $L^2(D)$ and $L^2(\partial D)$, and respectively by $\norm{\cdot}_D$ and
$\norm{\cdot}_{\partial D}$ the $L^2$-norms on $L^2(D)$ and $L^2(\partial D)$.  In particular,
$(\cdot,\cdot)$ and $\norm{\cdot}$ abbreviate $(\cdot,\cdot)_{\Omega}$ and $\norm{\cdot}_{\Omega}$,
respectively.

Let $\mathcal T_h$ be a conforming shape-regular
triangulation of the  polyhedral domain $\Omega$. For any $T\in\mathcal T_h$, $h_T$ denotes the diameter of $T$, and we set
$h: = \max_{T\in\mathcal T_h}h_T$. We define the mesh-dependent inner product
$
\langle\cdot,\cdot\rangle_h:\mathbb M_{h,k}\times \mathbb M_{h,k}\to\mathbb R
$
and the norm $\norm{\cdot}_h: \mathbb M_{h,k}\to\mathbb R$ as follows:
for any $\lam_h,\mu_h\in \mathbb M_{h,k},$
\begin{equation}
  \langle\lam_h,\mu_h\rangle_h :=  \sum_{T\in\mathcal T_h}h_T\int_{\partial T}\lam_h\mu_h,
  \quad \norm{\mu_h}_h :=\langle\mu_h,\mu_h\rangle_h^{1/2}.
\end{equation}
We also need the following notation: for any $\mu\in L^2(\partial T)$,
\begin{displaymath}
  \begin{array}{llll}
    \norm{\mu}_{h,\partial T}\!\!&\!\!:=\!\!&\!\! h_T^{\frac{1}{2}}\norm{\mu}_{\partial T},& \\
    \vertiii{\mu}_{h,\partial T}\!\! &\!\!:=\!\!&\!\! h_T^{-\frac{1}{2}}\norm{\mu-m_T(\mu)}_{\partial T} & \text{ with }
    \quad m_T(\mu):=\frac{1}{d+1}\sum\limits_{F\in\mathcal F_T}\frac{1}{|F|}\int_F\mu,\\
    \vertiii{\mu}_h \!\!&\!\!:=\!\!&\!\! (\sum_{T\in\mathcal T_h}\vertiii{\mu}^2_{h,\partial T})^{\frac{1}{2}},&
  \end{array}
\end{displaymath}
where $\mathcal F_T:=\{F:F\subset\partial T\text{ is a face of $T$}\}$ and
$|F|$ denotes the $(d-1)$-dimensional Hausdorff measure of $F$.
\par
In the context, we use $x \lesssim y $ to denote $x \leqslant cy$, where $c$ is a positive constant
independent of $h$ which may be different at its each occurrence. The notation $x \sim y$ abbreviates
$x \lesssim y\lesssim x$.
For the bilinear form $d_h(\cdot,\cdot)$ in the system \eqref{eq:sys}, we shall make the following
abstract assumption.
\begin{assum}\label{ass:d_h}
  For any $\mu_h\in \mathbb M_{h,k}$, it holds
  \begin{equation}\label{eq:assum d_h}
    d_h(\mu_h,\mu_h) \sim \vertiii{\mu_h}^2_h.
  \end{equation}
\end{assum}
\begin{rem}
  This assumption is valid for many nonstandard finite element methods, as will be shown in Section
  \ref{sec_appl}. We note that the Schwarz preconditioner constructed in \cite{schwarz_pre} can also
  be extended to the system \eqref{eq:sys} under {\bf Assumption \ref{ass:d_h}}.
\end{rem}
Based on {\bf Assumption \ref{ass:d_h}}, we are ready to present an estimate that describes the
conditioning of the system \eqref{eq:sys}.
\begin{thm}\label{thm_cond}
  Suppose  $\mathcal T_h$ to be quasi-uniform.  Under {\bf Assumption \ref{ass:d_h}}, it holds
  \begin{equation}
    \norm{\mu_h}^2_h \lesssim d_h(\mu_h,\mu_h) \lesssim h^{-2}\norm{\mu_h}^2_h,~\forall\mu_h\in \mathbb M_{h,k}.
  \end{equation}
\end{thm}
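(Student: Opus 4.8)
The plan is to remove $d_h$ from the picture via \textbf{Assumption~\ref{ass:d_h}} and reduce the statement to the two norm comparisons $\norm{\mu_h}^2_h\lesssim\vertiii{\mu_h}^2_h$ and $\vertiii{\mu_h}^2_h\lesssim h^{-2}\norm{\mu_h}^2_h$. Since $\mathcal T_h$ is quasi-uniform we have $h_T\sim h$ for every $T\in\mathcal T_h$, so throughout we may freely replace $h_T$ by $h$ in sums over elements; the nontrivial content is therefore entirely about how $\vertiii{\cdot}_h$ and $\norm{\cdot}_h$ compare element-by-element (for the upper bound) and globally (for the lower bound).

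For the upper bound it suffices, by \textbf{Assumption~\ref{ass:d_h}}, to bound $\vertiii{\mu_h}^2_h$. Fix $T$. By Cauchy--Schwarz on each face and shape-regularity (so that $|\partial T|\sim h_T^{d-1}\sim|F|$ for all $F\in\mathcal F_T$) one obtains $\norm{m_T(\mu_h)}_{\partial T}\lesssim\norm{\mu_h}_{\partial T}$, hence by the triangle inequality
$\vertiii{\mu_h}^2_{h,\partial T}=h_T^{-1}\norm{\mu_h-m_T(\mu_h)}^2_{\partial T}\lesssim h_T^{-1}\norm{\mu_h}^2_{\partial T}=h_T^{-2}\norm{\mu_h}^2_{h,\partial T}$.
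Summing over $T$ and invoking $h_T\sim h$ gives $\vertiii{\mu_h}^2_h\lesssim h^{-2}\norm{\mu_h}^2_h$, which is the desired estimate.

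For the lower bound it suffices, again by \textbf{Assumption~\ref{ass:d_h}}, to prove $\norm{\mu_h}^2_h\lesssim\vertiii{\mu_h}^2_h$. This inequality is genuinely global, since $\vertiii{\cdot}_{h,\partial T}$ vanishes on element-wise constants, so the boundary condition $\mu_h|_{\partial\Omega}=0$ and the connectivity of the mesh must be used. Introduce the piecewise-constant function $m_h\in L^2(\Omega)$ with $m_h|_T:=m_T(\mu_h)$. First, $\norm{\mu_h}^2_{h,\partial T}\le 2h_T\norm{\mu_h-m_T(\mu_h)}^2_{\partial T}+2h_T\norm{m_T(\mu_h)}^2_{\partial T}\lesssim h_T^2\vertiii{\mu_h}^2_{h,\partial T}+h_T^d|m_T(\mu_h)|^2$; summing and using $h\lesssim 1$, $h_T\sim h$, and $\sum_T h_T^d|m_T(\mu_h)|^2\sim\norm{m_h}^2$ yields $\norm{\mu_h}^2_h\lesssim\vertiii{\mu_h}^2_h+\norm{m_h}^2$. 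Next, for an interior face $F=\partial T_1\cap\partial T_2$, writing $\bar\mu_F:=|F|^{-1}\int_F\mu_h$ and using that $\bar\mu_F$ is the best constant approximation of $\mu_h$ on $F$, one gets $|F|^{1/2}|m_{T_i}(\mu_h)-\bar\mu_F|=\norm{m_{T_i}(\mu_h)-\bar\mu_F}_F\le\norm{\mu_h-m_{T_i}(\mu_h)}_F+\norm{\mu_h-\bar\mu_F}_F\lesssim\norm{\mu_h-m_{T_i}(\mu_h)}_{\partial T_i}$, so the jump satisfies $|F|\,[m_h]_F^2\lesssim\sum_{i=1,2}\norm{\mu_h-m_{T_i}(\mu_h)}^2_{\partial T_i}$; for a boundary face $F\subset\partial T\cap\partial\Omega$ the relation $\mu_h|_F=0$ gives similarly $|F|\,|m_T(\mu_h)|^2\lesssim\norm{\mu_h-m_T(\mu_h)}^2_{\partial T}$. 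Summing over all faces, with each element counted $d+1$ times, produces $\sum_F|F|\,[m_h]_F^2\lesssim\sum_T\norm{\mu_h-m_T(\mu_h)}^2_{\partial T}=\sum_T h_T\vertiii{\mu_h}^2_{h,\partial T}\lesssim h\,\vertiii{\mu_h}^2_h$.

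The final ingredient is a discrete Poincar\'e--Friedrichs inequality for piecewise constants on the quasi-uniform mesh: $\norm{m_h}^2\lesssim h^{-1}\sum_F|F|\,[m_h]_F^2$, with the constant depending only on $\Omega$ and the shape-regularity parameter. Combining it with the previous display gives $\norm{m_h}^2\lesssim\vertiii{\mu_h}^2_h$, hence $\norm{\mu_h}^2_h\lesssim\vertiii{\mu_h}^2_h\sim d_h(\mu_h,\mu_h)$, which completes the argument. I expect the only non-routine step to be this discrete Poincar\'e--Friedrichs inequality: it is the step that actually exploits the global structure of the mesh and the Dirichlet condition, and one must ensure its constant is $h$-independent --- precisely the point at which quasi-uniformity is needed beyond the local bookkeeping. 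I would either cite a standard reference or prove it by the usual device of testing against a suitable smooth extension of the constant function (equivalently, the Eymard--Gallou\"et--Herbin finite-volume estimate).
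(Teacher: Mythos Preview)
Your overall strategy coincides with the paper's: invoke \textbf{Assumption~\ref{ass:d_h}} to reduce the statement to the two purely geometric inequalities $\norm{\mu_h}^2_h\lesssim\vertiii{\mu_h}^2_h$ and $\vertiii{\mu_h}^2_h\lesssim h^{-2}\norm{\mu_h}^2_h$, and treat the second as essentially trivial. The difference lies in the first (lower) inequality. The paper does not prove it here at all; it simply cites Lemma~3.1 of \cite{Li;Xie;2014;WG} and moves on. You instead give a self-contained argument: pass to the piecewise-constant field $m_h|_T=m_T(\mu_h)$, control its inter-element jumps and boundary values by the local oscillations $\norm{\mu_h-m_T(\mu_h)}_{\partial T}$, and close with a discrete Poincar\'e--Friedrichs inequality for piecewise constants. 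This is correct and is a standard route to that lemma; the scaling you track ($\sum_F|F|[m_h]_F^2\lesssim h\vertiii{\mu_h}_h^2$ together with $\norm{m_h}^2\lesssim h^{-1}\sum_F|F|[m_h]_F^2$) is exactly right. One small comment: the discrete Poincar\'e--Friedrichs step you flag as ``the only non-routine step'' in fact holds already under shape-regularity with an $h$-independent constant (in the local form $\norm{m_h}^2\lesssim\sum_F h_F^{-1}|F|[m_h]_F^2$), so quasi-uniformity is not needed \emph{there} specifically; it is used rather to convert the mixed local $h_T$, $h_F$ factors into a single global $h$ in both bounds. What your approach buys over the paper's is self-containment; what the paper's citation buys is brevity.
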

\begin{proof}
  By Lemma 3.1 of \cite{Li;Xie;2014;WG}, we have
  \begin{equation}
    \norm{\mu_h}^2_h\lesssim\vertiii{\mu_h}_h^2.
  \end{equation}
  Then the desired conclusion follows from    {\bf Assumption \ref{ass:d_h}} and the fact $\vertiii{\mu_h}^2_h\lesssim h^{-2}\norm{\mu_h}^2_h$.
\end{proof}
\begin{rem}
  In Theorem 2.3 of \cite{schwarz_pre}, a similar result was derived in the two-dimensional case. But the proof there
  could not be extended to three-dimensional case directly.
\end{rem}

We define the operator $D_h:\mathbb M_{h,k}\to \mathbb M_{h,k}$ by
\begin{equation}\label{def_D_h}
  \inpro{D_h\lam_h,\mu_h} := d_h(\lam_h,\mu_h)~~~\text{ for all $\lam_h,\mu_h\in \mathbb M_{h,k}$}.
\end{equation}
Obviously, $D_h$ is an SPD operator and, from Theorem \ref{thm_cond}, it follows the condition
number estimate
\begin{equation}\label{cond-Dh}
  \kappa(D_h)\lesssim h^{-2},
\end{equation}
where $\kappa(D_h) := \frac{\lam_{\text{\text{max}}}(D_h)}{\lam_{\text{min}}(D_h)}$ and
$\lam_{\text{max}}(D_h),\lam_{\text{min}}(D_h)$ denote the maximum and minimum eigenvalues of $D_h$, respectively.
In fact, with a slight modification of the proof of Theorem 2.1 of \cite{Li;Xie;2014;WG}, we can show that
$\kappa (D_h) \sim h^{-2}$ holds under the condition that $h$ is sufficiently small.

\section{Auxiliary space preconditioning}\label{sec:aux_pre}
In this section, we shall follow the basic idea of \cite{Xu;1996} to introduce a general auxiliary
space preconditioner for $D_h$. It should be stressed that we only require the triangulation $\mathcal T_h$
to be conforming and shape regular.

Let $V$ be a finite dimensional Hilbert space endowed with
inner product $(\cdot,\cdot)$ and its induced norm $\norm{\cdot}$. Let $S:V\to V$ be SPD with respect to
$(\cdot,\cdot)$. We use $(\cdot,\cdot)_S$ to denote the inner product $(S\cdot,\cdot)$, and use $\norm{\cdot}_S$
to denote the norm induced by $(\cdot,\cdot)_S$.


We choose the $H^1$-conforming piecewise linear element space as the  so-called auxiliary space $V^c_h$, namely
\begin{equation}
  V^c_h:=\{v_h\in H^1_0(\Omega):v_h|_T\in P_1(T)~\text{ for all $T\in\mathcal T_h$}\}.
\end{equation}
Then we introduce two different prolongation operators that map $V^c_h$ into $\mathbb M_{h,k}$ as follows:
\begin{itemize}
  \item
    $\Pi_h^1:V^c_h\to \mathbb M_{h,k}$ is defined by
    \begin{equation}\label{eq:def Pi_h^1}
      \Pi_h^1 v_h |_F := \frac{1}{|F|}\int_Fv_h~~~\text{ for all $F\in\mathcal F_h$ and $v_h\in V^c_h$}.
    \end{equation}
  \item
    $\Pi_h^2:V^c_h\to\mathbb M_{h,k}$ is defined by
    \begin{equation}\label{eq:def Pi_h^2}
      \int_F\Pi_h^2v_hq := \int_Fv_hq~~~\text{ for all $F\in\mathcal F_h, v_h\in V^c_h$ and $q\in P_k(F)$}.
    \end{equation}
\end{itemize}
Obviously, $\Pi_h^1$ coincides with $\Pi_h^2$ in the case that $k=0$. For the sake of convenience, in the rest of this
paper, unless otherwise specified, we shall use $\Pi_h$ to denote both $\Pi_h^1$ and $\Pi_h^2$ at the same time.
Define the adjoint operator, $\Pi_h^t:\mathbb M_{h,k}\to V^c_h$, of $\Pi_h$, by
\begin{equation}\label{eq:def Pi_h^t}
  (\Pi_h^t\mu_h,v_h) := \inpro{\mu_h,\Pi_h v_h}~~~\text{ for all $\mu_h\in\mathbb M_{h,k}$ and $v_h\in V^c_h$.}
\end{equation}

Before defining the auxiliary space preconditioner, we introduce two linear operators, $S_h$ and $\we{B_h}$, in the following two assumptions.
\begin{assum}\label{ass:S_h}
  Let   $ S_h:M_{h,k}\to M_{h,k}$ be SPD with respect to $\inpro{\cdot,\cdot}$ and satisfy the following estimates: for all $\mu_h\in M_{h,k}$,
  \begin{eqnarray}
    &\inpro{S_h\mu_h,\mu_h}\lesssim  \inpro{D_h^{-1}\mu_h,\mu_h},  \label{eq:ass S_h 1}&\\ 
    &\norm{\mu_h}^2_{S_h^{-1}}\lesssim\sum_{T\in T_h}h_T^{-2}\norm{\mu_h}^2_{h,\partial T}.
    & \label{eq:ass S_h 2}
  \end{eqnarray}
\end{assum}

\begin{assum}\label{ass:we B_h}
  Let $\we{B_h}:V^c_h\to V^c_h$ be SPD with respect to $(\cdot,\cdot)$ and satisfy the
  estimate
  $$
  (\we{B_h}^{-1}v_h,v_h)\sim\verti{v_h}^2_{1,\Omega}~~~\text{ for all $v_h\in V^c_h$}.
  $$
\end{assum}

Then we define the general auxiliary space preconditioner $ B^G_h:M_{h,k}\to M_{h,k}$ by
\begin{equation}\label{eq:def B^G_h}
  B^G_h:= S_h+\Pi_h\we{ B_h}\Pi_h^t.
\end{equation}
\begin{rem} We note that the Jacobi iteration and the symmetric Gauss-Seidel iteration
  satisfy {\bf Assumption \ref{ass:S_h}} if $\mathcal T_h$ is conforming and shape regular,
  while the Richardson iteration does if $\mathcal T_h$ is quasi-uniform.
\end{rem}

\begin{rem}
  The preconditioner $B^G_h$ was also analyzed recently in \cite{Chen-Wang-Wang-Ye2014}
  for two types of WG methods, where $\mathcal T_h$ is assumed to be quasi-uniform.
  In our analysis below we only require $\mathcal T_h$ to be conforming and shape regular.
  We refer to Theorem 2.1 of \cite{Xu;1996} for a more general result  for auxiliary space
  preconditioning under quasi-uniform meshes.
\end{rem}
For the auxiliary space preconditioner $B_h^G$, we have the following main result.
\begin{thm}\label{thm:aux_pre}
  Under {\bf Assumptions \ref{ass:d_h}, \ref{ass:S_h}} and {\bf \ref{ass:we B_h}}, it holds
  \begin{equation}\label{eq:cond_psc}
    \kappa( B^G_hD_h)\lesssim 1,
  \end{equation}
  where $\kappa(B^G_hD_h):=\frac{\lam_{\text{max}}(B^G_hD_h)}{\lam_{\text{min}}(B^G_hD_h)}$, and
  $\lam_{\text{max}}(B^G_hD_h)$ and $\lam_{\text{min}}(B^G_hD_h)$ denote the maximum and minimum
  eigenvalues of $B^G_hD_h$, respectively.
\end{thm}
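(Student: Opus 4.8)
The plan is to verify the abstract auxiliary-space framework of \cite{Xu;1996} (specifically the fictitious/auxiliary space lemma) for the operator $D_h$ with auxiliary space $V^c_h$, prolongation $\Pi_h$, and smoother $S_h$, and then invoke the known condition-number bound it provides. Concretely, I would show that $B^G_hD_h$ is spectrally equivalent to the identity by bounding its extreme eigenvalues separately. For the upper bound $\lambda_{\text{max}}(B^G_hD_h)\lesssim 1$, I would use the decomposition $B^G_h=S_h+\Pi_h\we{B_h}\Pi_h^t$ and show that for every $\mu_h\in\mathbb M_{h,k}$,
\begin{equation}
  d_h(S_h\mu_h,\mu_h)\lesssim \norm{\mu_h}_h^2 \quad\text{and}\quad d_h(\Pi_h\we{B_h}\Pi_h^t\mu_h,\mu_h)\lesssim\norm{\mu_h}_h^2 ,
\end{equation}
wait — more precisely I want $\inpro{D_h B^G_h\mu_h,\mu_h}\lesssim\inpro{\mu_h,\mu_h}$ in the appropriate pairing. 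The first term is immediate from \eqref{eq:ass S_h 1}, which says $\inpro{S_h\mu_h,\mu_h}\lesssim\inpro{D_h^{-1}\mu_h,\mu_h}$, i.e. $S_h\lesssim D_h^{-1}$ in the operator sense, so $D_h^{1/2}S_hD_h^{1/2}\lesssim I$. For the second term I would use that $\Pi_h^t$ is the $\inpro{\cdot,\cdot}$-adjoint of $\Pi_h$, so the problem reduces to the stability estimate $(\we{B_h}\Pi_h^t\mu_h,\Pi_h^t\mu_h)\lesssim\inpro{D_h^{-1}\mu_h,\mu_h}$; using {\bf Assumption \ref{ass:we B_h}} this becomes $\verti{\Pi_h^t\mu_h}_{1,\Omega}^2\lesssim\inpro{D_h^{-1}\mu_h,\mu_h}$, which by duality/definition of $\Pi_h^t$ amounts to the prolongation bound $\norm{\Pi_h v_h}_{S_h^{-1}}\lesssim\verti{v_h}_{1,\Omega}$ — or, after combining with {\bf Assumption \ref{ass:d_h}} and Theorem \ref{thm_cond}, to $\vertiii{\Pi_h v_h}_h\lesssim\verti{v_h}_{1,\Omega}$. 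The latter is a local trace/approximation estimate on each element: on $T$, $\Pi_h v_h$ is the face-average (or $L^2$-face-projection) of a continuous piecewise-linear $v_h$, and one uses scaling plus the fact that $v_h|_T$ is linear to control $h_T^{-1/2}\norm{\Pi_h v_h - m_T(\Pi_h v_h)}_{\partial T}$ by $\verti{v_h}_{1,T}$, then sums over $T$.

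For the lower bound $\lambda_{\text{min}}(B^G_hD_h)\gtrsim 1$, equivalently $\kappa\lesssim 1$, I would use the standard auxiliary-space argument: for any $\mu_h\in\mathbb M_{h,k}$ one must construct a \emph{stable decomposition} $\mu_h=\mu_h^0+\Pi_h v_h$ with $v_h\in V^c_h$ such that
\begin{equation}
  \norm{\mu_h^0}_{S_h^{-1}}^2 + \verti{v_h}_{1,\Omega}^2\lesssim \inpro{D_h\mu_h,\mu_h}\sim\vertiii{\mu_h}_h^2 .
\end{equation}
The natural choice is to let $v_h=E_h\mu_h\in V^c_h$ be a Scott–Zhang–type (averaging) quasi-interpolant built from the nodal values of a regularized version of $\mu_h$, i.e. at each interior vertex average the local face data; then set $\mu_h^0:=\mu_h-\Pi_h v_h$. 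One needs $\verti{E_h\mu_h}_{1,\Omega}\lesssim\vertiii{\mu_h}_h$ — an $H^1$-stability estimate for the averaging operator in the mesh-dependent norm, proved elementwise by scaling and a Poincaré-type inequality that exploits the subtraction of $m_T(\mu_h)$ in the definition of $\vertiii{\cdot}_{h,\partial T}$ — and a companion estimate $\norm{\mu_h^0}_{S_h^{-1}}\lesssim\vertiii{\mu_h}_h$. For the latter I would use \eqref{eq:ass S_h 2}, which reduces it to $\sum_T h_T^{-2}\norm{\mu_h^0}_{h,\partial T}^2=\sum_T h_T^{-1}\norm{\mu_h-\Pi_h v_h}_{\partial T}^2\lesssim\vertiii{\mu_h}_h^2$; this is again an elementwise scaled estimate, controlling $\mu_h-\Pi_h v_h$ on $\partial T$ by its oscillation plus the interpolation error of $v_h$, both of which are bounded by $\vertiii{\mu_h}_{h,\partial T'}$ over a local patch of elements $T'$, and finite overlap of patches (shape regularity) closes the sum.

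The main obstacle is the construction and analysis of the averaging interpolant $E_h:\mathbb M_{h,k}\to V^c_h$ and its stability in the mesh-dependent norm $\vertiii{\cdot}_h$ on genuinely non–quasi-uniform, merely shape-regular meshes; this is where the paper's novelty over \cite{Chen-Wang-Wang-Ye2014} lies, since one cannot simply invoke a global inverse inequality and must instead argue patchwise with careful $h_T$-bookkeeping, ensuring every constant depends only on shape regularity. A secondary technical point is handling the two prolongations $\Pi_h^1,\Pi_h^2$ uniformly: one checks that $\Pi_h^2v_h$ (the $L^2$-face-projection of degree $\le k$) and $\Pi_h^1v_h$ (the face average) satisfy the same local bounds $\vertiii{\Pi_h v_h}_{h,\partial T}\lesssim\verti{v_h}_{1,T}$ and $\norm{\mu_h-\Pi_h v_h}_{\partial T}\lesssim\norm{\mu_h-\text{(trace of }v_h)}_{\partial T}$, using that $L^2$-projection is a contraction and that $v_h$ is linear, so no separate argument is needed. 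Once these elementwise estimates and the patch-finite-overlap summation are in place, the theorem follows by assembling the upper and lower spectral bounds.
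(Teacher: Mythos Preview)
Your approach is essentially the paper's: the averaging interpolant you call $E_h$ is exactly the operator $P_h$ defined just before Lemma~\ref{lem:P_h}, and that lemma supplies precisely your two stable-decomposition estimates $|P_h\mu_h|_{1,\Omega}\lesssim\vertiii{\mu_h}_h$ and $\sum_T h_T^{-2}\norm{(I-\Pi_hP_h)\mu_h}^2_{h,\partial T}\lesssim\vertiii{\mu_h}_h^2$; the paper then invokes the additive identity $\inpro{(B^G_h)^{-1}\lambda_h,\lambda_h}=\inf_{\mu_h+\Pi_hv_h=\lambda_h}\bigl(\norm{\mu_h}^2_{S_h^{-1}}+\norm{v_h}^2_{\we{B_h}^{-1}}\bigr)$ from \cite{Chen;2010} to finish the lower bound, and your prolongation estimate $\vertiii{\Pi_hv_h}_h\lesssim|v_h|_{1,\Omega}$ is exactly the paper's \eqref{eq:500}.

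One correction to your upper-bound sketch: Assumption~\ref{ass:we B_h} says $\we{B_h}^{-1}\sim A_h$ (the discrete $H^1$-operator on $V^c_h$), so $(\we{B_h}w,w)\sim\norm{w}^2_{A_h^{-1}}$, \emph{not} $|w|^2_{1,\Omega}$; consequently the duality step gives $\norm{\Pi_hv_h}_{D_h}\lesssim|v_h|_{1,\Omega}$ (not an $S_h^{-1}$-norm bound), which is indeed your final target $\vertiii{\Pi_hv_h}_h\lesssim|v_h|_{1,\Omega}$. The paper packages this by introducing $\we{D_h}:=\Pi_h^tD_h\Pi_h$, showing $(\we{D_h}v_h,v_h)\sim(\we{B_h}^{-1}v_h,v_h)$, and then bounding $(\we{B_h}\Pi_h^tD_h\mu_h,\Pi_h^tD_h\mu_h)\lesssim\norm{\we{D_h}^{-1}\Pi_h^tD_h\mu_h}^2_{\we{D_h}}\leqslant\norm{\mu_h}^2_{D_h}$ via a one-line sup argument.
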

\begin{rem}
  Since we only require $\mathcal T_h$ to be conforming and shape regular, Theorem \ref{thm:aux_pre}
  is not a trivial application of Theorem 2.1 in \cite{Xu;1996}.
\end{rem}

Before proving Theorem \ref{thm:aux_pre}, we introduce a key ingredient operator $P_h:M_{h,k}\to V^c_h$
as follows: For each node $\bm a$ of $\mathcal T_h$,
\begin{equation}
  P_h\mu_h (\bm a) :=
  \left\{
    \begin{array}{ll}
      \frac{\sum_{T\in\omega_{\bm a}}m_T(\mu_h)}{\sum_{T\in\omega_{\bm a}}1}
      & \text{ if $\bm a$ is an interior node,}\\
      0 & \text{ if $\bm a\in\partial\Omega$,}
    \end{array}
  \right.
\end{equation}
where $\omega_{\bm a}$ denotes the set of simplexes that share the vertex $\bm a$.
\begin{lem}\label{lem:P_h}
  For any $\mu_h\in \mathbb M_{h,k}$, it holds
  \begin{equation}\label{eq_p_h}
    |P_h\mu_h|_{1,\Omega} \lesssim \vertiii{\mu_h}_h,
  \end{equation}
  \begin{equation}\label{311}
    \sum_{T\in\mathcal T_h}h_T^{-2}\norm{(I-\Pi_hP_h)\lam_h}^2_{h,\partial T}\lesssim \vertiii{\mu_h}^2_h.
  \end{equation}
\end{lem}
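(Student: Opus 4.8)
The plan is to establish the two estimates separately, treating \eqref{eq_p_h} as the foundation and then building \eqref{311} on top of it via an element-by-element triangle-inequality argument. For \eqref{eq_p_h}, since $P_h\mu_h$ is a piecewise-linear $H^1_0$ function, its $H^1$-seminorm is controlled element by element: on each $T$, $|P_h\mu_h|_{1,T}^2 \lesssim h_T^{d-2}\sum_{\bm a,\bm b\in T}|P_h\mu_h(\bm a)-P_h\mu_h(\bm b)|^2$ by a standard inverse-type / affine-equivalence estimate on a shape-regular simplex. Each nodal value $P_h\mu_h(\bm a)$ is an average of the local means $m_T(\mu_h)$ over $T\in\omega_{\bm a}$, so each difference $P_h\mu_h(\bm a)-P_h\mu_h(\bm b)$ telescopes into a sum of differences of the form $m_{T}(\mu_h)-m_{T'}(\mu_h)$ for neighbouring simplices $T,T'$ (including, when $\bm a$ or $\bm b$ lies on $\partial\Omega$, terms $m_T(\mu_h)-0$, which must be absorbed using that a boundary node of a shape-regular mesh has a neighbour whose own boundary mean on a face in $\partial\Omega$ vanishes because $\mu_h|_{\partial\Omega}=0$). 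I would then bound $|m_T(\mu_h)-m_{T'}(\mu_h)|$ by traces of $\mu_h-m_T(\mu_h)$ and $\mu_h-m_{T'}(\mu_h)$ on the shared face $F$: namely $|m_T(\mu_h)-m_{T'}(\mu_h)| \lesssim h_T^{-(d-1)/2}(\norm{\mu_h-m_T(\mu_h)}_{F}+\norm{\mu_h-m_{T'}(\mu_h)}_{F})$, and then $\norm{\cdot}_F\le\norm{\cdot}_{\partial T}$, so that after summing and using shape-regularity and finite overlap of the patches $\omega_{\bm a}$ we recover exactly $\vertiii{\mu_h}_h^2$.

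For \eqref{311} (where, reading the displayed formula, $\lam_h$ should be $\mu_h$ and I would write it as such), I would split on each element $T$ using the triangle inequality:
\begin{equation*}
  h_T^{-2}\norm{(I-\Pi_hP_h)\mu_h}_{h,\partial T}^2
  \lesssim h_T^{-2}\norm{\mu_h - m_T(\mu_h)}_{h,\partial T}^2
  + h_T^{-2}\norm{m_T(\mu_h) - \Pi_h P_h\mu_h}_{h,\partial T}^2 .
\end{equation*}
Recalling $\norm{\cdot}_{h,\partial T}^2 = h_T\norm{\cdot}_{\partial T}^2$, the first term is $h_T^{-1}\norm{\mu_h-m_T(\mu_h)}_{\partial T}^2 = \vertiii{\mu_h}_{h,\partial T}^2$, which sums to $\vertiii{\mu_h}_h^2$ directly. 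For the second term, $m_T(\mu_h)$ is a constant and $\Pi_h P_h\mu_h$ is (face-by-face) the projection of the piecewise-linear function $P_h\mu_h$, so on each face $F\subset\partial T$ the quantity $m_T(\mu_h)-\Pi_h P_h\mu_h|_F$ is controlled by the nodal values $\{m_T(\mu_h)-P_h\mu_h(\bm a):\bm a\in T\}$ (using that $\Pi_h$ applied to a constant is that constant, and a scaling/trace estimate on $F$). Each $m_T(\mu_h)-P_h\mu_h(\bm a)$ is again a (weighted) sum of $m_T(\mu_h)-m_{T'}(\mu_h)$ over $T'\in\omega_{\bm a}$, which I bound exactly as in the first part by traces of $\mu_h-m_{T}(\mu_h)$ and $\mu_h-m_{T'}(\mu_h)$; summing over $T$ and using finite overlap again yields the bound $\vertiii{\mu_h}_h^2$. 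In fact \eqref{311} can alternatively be deduced more cleanly from \eqref{eq_p_h}: write $(I-\Pi_hP_h)\mu_h = (\mu_h - m_T(\mu_h)) + (m_T(\mu_h)- P_h\mu_h) + (P_h\mu_h - \Pi_h P_h\mu_h)$ on $\partial T$, bound the last piece by the standard approximation property of the face-average/face-projection $\Pi_h$ applied to the $H^1$ function $P_h\mu_h$ (giving $\lesssim h_T|P_h\mu_h|_{1,T}$ locally), and then invoke \eqref{eq_p_h}; the middle piece is handled by the nodal-difference argument above.

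The main obstacle is the careful bookkeeping in the nodal-difference/telescoping step — in particular, the boundary nodes, where $P_h\mu_h$ is set to $0$ rather than to a local mean. One must verify that a shape-regular triangulation guarantees, for every element $T$ touching $\partial\Omega$, a chain of neighbouring simplices connecting $T$ to one possessing a face on $\partial\Omega$, with chain length bounded independently of $h$, so that $m_T(\mu_h) - 0$ can be written as a bounded sum of interior differences $m_{T_i}(\mu_h)-m_{T_{i+1}}(\mu_h)$ plus a term involving $\norm{\mu_h - m_{T_N}(\mu_h)}_{F_N}$ on a boundary face $F_N$ where $\mu_h$ itself vanishes. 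Everything else — the affine-equivalence estimates, inverse inequalities on faces, the trace/scaling bounds $|m_T(\mu_h)-m_{T'}(\mu_h)| \lesssim h_T^{-(d-1)/2}\norm{\mu_h - m_T(\mu_h)}_{\partial T}$ after noting $m_T$ and $m_{T'}$ are both built from face means that are themselves controlled by $\norm{\mu_h-m_T(\mu_h)}_{\partial T}$, and the finite-overlap summation — is routine under shape-regularity and I would not grind through it in detail.
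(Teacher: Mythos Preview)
Your proposal is correct and follows essentially the same route as the paper: bound nodal quantities of the form $m_T(\mu_h)-P_h\mu_h(\bm a)$ by telescoping over face-adjacent simplices in $\omega_{\bm a}$, handle boundary vertices using that $\mu_h$ vanishes on a nearby boundary face, and then feed this into a standard scaling/inverse estimate for the piecewise-linear function $P_h\mu_h$; for \eqref{311} both you and the paper split via $m_T(\mu_h)$ and reuse the same nodal estimate. The only organisational difference is that the paper isolates the single inequality
\[
  h_T^{d-2}\sum_{\bm a\in\mathcal N(T)}|m_T(\mu_h)-P_h\mu_h(\bm a)|^2\lesssim\sum_{T'\in\omega_T}\vertiii{\mu_h}^2_{h,\partial T'}
\]
up front and derives both \eqref{eq_p_h} and \eqref{311} from it, which is a bit cleaner than redoing the telescoping twice; your treatment of the boundary case (chain within $\omega_{\bm a}$ to a simplex owning a boundary face) is in fact more carefully justified than the paper's, which tacitly assumes $T$ itself has such a face.
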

\begin{proof}
  For any $T\in\mathcal T_h$, we use $\mathcal N(T)$,  $\omega_T$ to denote the set of all vertexes of $T$ and
  the set $\{T'\in\mathcal T_h:T'\in\omega_{\bm a}\text{ for some $\bm a\in\mathcal N(T)$}\}$, respectively.
  For $\bm a\in\mathcal N(T)$, if $\bm a\in\Omega$, then we have
  \begin{displaymath}
    \begin{split}
      &h_T^{d-2}|m_T(\mu_h)-(P_h\mu_h)(\bm a)|^2\\
      \lesssim& h_T^{d-2}\sum_{\substack{T_1,T_2\in\omega_{\bm a}\\T_1, T_2 \text{ share a same face}}}
      |m_{T_1}(\mu_h)-m_{T_2}(\mu_h)|^2\\
      \lesssim& h_T^{-1}\sum_{\substack{T_1,T_2\in\omega_{\bm a}\\T_1, T_2 \text{ share a same face}}}
      \norm{m_{T_1}(\mu_h)-m_{T_2}(\mu_h)}^2_{\partial T_1\cap\partial T_2}\\
      \lesssim& h_T^{-1}\sum_{\substack{T_1,T_2\in\omega_{\bm a}\\T_1, T_2 \text{ share a same face}}}
      \bigg(\norm{\mu_h-m_{T_1}(\mu_h)}^2_{\partial T_1\cap\partial T_2}+\norm{\mu_h-m_{T_2}(\mu_h)}^2_{\partial T_1\cap\partial T_2}\bigg)\\
      \lesssim& h_T^{-1}\sum_{T'\in\mathcal\omega_{\bm a}}\norm{\mu_h-m_{T'}(\mu_h)}^2_{\partial T'}\\
      \lesssim& \sum_{T'\in\mathcal\omega_{\bm a}}\vertiii{\mu_h}^2_{h,\partial T'}.
    \end{split}
  \end{displaymath}
  If $\bm a\in\partial\Omega$, suppose that $F\subset\partial\Omega$ is a face of $T$ such that
  $\bm a\in\partial F$. Since $\mu_h|_{\partial\Omega}=0$, we have
  \begin{displaymath}
    \begin{split}
      h_T^{d-2}|m_T(\mu_h)-(P_h\mu_h)(\bm a)|^2
      =h_T^{d-2}|m_T(\mu_h)|^2
      \sim&h_T^{-1}\norm{m_T(\mu_h)}^2_F\\
      \lesssim&h_T^{-1}\norm{\mu_h-m_T(\mu_h)}^2_F\\
      \lesssim&\vertiii{\mu_h}^2_{h,\partial T}.
    \end{split}
  \end{displaymath}
  In light of the above two estimates, we immediately get
  \begin{equation}\label{eq:111}
    h_T^{d-2}\sum_{\bm a\in\mathcal N(T)}|m_T(\mu_h)-(P_h\mu_h)(\bm a)|^2
    \lesssim \sum_{T'\in\omega_T}\vertiii{\mu_h}^2_{h,\partial T'}.
  \end{equation}
  Since $m_T(\mu_h)$ is a constant on $T$,  it follows
  \begin{displaymath}
    \begin{array}{rll}
      |P_h\mu_h|^2_{1,T} &= |m_T(\mu_h)-P_h\mu_h|^2_{1,T}&\\
      &\lesssim h_T^{-2}\norm{m_T(\mu_h)-P_h\mu_h}^2_T&\text{ (by inverse estimate) }\\
      &\lesssim h_T^{d-2}\sum_{\bm a\in\mathcal N(T)}|m_T(\mu_h)-(P_h\mu_h)(\bm a)|^2&\\
      &\lesssim \sum_{T'\in\mathcal\omega_T}\vertiii{\mu_h}^2_{h,\partial T'},&\text{ (by \eqref{eq:111})}
    \end{array}
  \end{displaymath}
  which implies
  $$
  |P_h\mu_h|^2_{1,\Omega} = \sum_{T\in\mathcal T_h}|P_h\mu_h|^2_{1,T} \lesssim \vertiii{\mu_h}_h^2,
  $$
  i.e., the estimate \eqref{eq_p_h} holds.
  \par

  We recall that $\mathcal F_T$ is the set of all faces of $T$.  For any $F\in\mathcal F_T$, we use $\mathcal N(F)$ to denote the set of all vertexes of $F$.
  Since
  \begin{displaymath}
    \begin{split}
      \norm{m_T(\mu_h)-\Pi_hP_h\mu_h}^2_{\partial T}
      &=\sum_{F\in\mathcal F_T}\norm{m_T(\mu_h)-\Pi_hP_h\mu_h}^2_F\\
      &\lesssim h_T^{d-1}\sum_{F\in\mathcal F_T}\sum_{\bm a\in\mathcal N(F)}|m_T(\mu_h)-P_h\mu_h(\bm a)|^2
      ~~~\text{(by  \eqref{eq:def Pi_h^1} and \eqref{eq:def Pi_h^2})}\\
      &\lesssim h_T^{d-1}\sum_{\bm a\in\mathcal N(T)}|m_T(\mu_h)-P_h\mu_h(\bm a)|^2\\
      &\lesssim h_T\sum_{T'\in\omega_T}\vertiii{\mu_h}^2_{h,T'},~~~\text{(by \eqref{eq:111})}
    \end{split}
  \end{displaymath}
  we get
  \begin{displaymath}
    \begin{split}
      \norm{\mu_h-\Pi_hP_h\mu_h}^2_{\partial T} &\lesssim h_T\vertiii{\mu_h}^2_{h,\partial T} +
      \norm{m_T(\mu_h)-\Pi_hP_h\mu_h}^2_{\partial T}\\
      &\lesssim\sum_{T'\in\omega_T}h_{T'}\vertiii{\mu_h}^2_{h,\partial T'}.
    \end{split}
  \end{displaymath}
  Therefore,
  $$
  \norm{(I-\Pi_hP_h)\mu_h}^2_h=\sum_{T\in\mathcal T_h}h_T\norm{(I-\Pi_hP_h)\mu_h}^2_{\partial T}
  \lesssim h^2\vertiii{\mu_h}_h^2,
  $$
  i.e.  \eqref{311} holds.
\end{proof}
Now, we are ready to prove Theorem \ref{thm:aux_pre}.\\
{\bf Proof of Theorem \ref{thm:aux_pre}}.
For any $T\in\mathcal T_h$,  standard scaling arguments yield
\begin{equation}\label{eq:500}
  \vertiii{\Pi_hv}_{h,\partial T}\sim\verti{v}_{1,T}~~\text{ for all $v\in P_1(T)$}.
\end{equation}
Define $\we{D_h}:= \Pi_h^tD_h\Pi_h$. Then, for any $v_h\in V^c_h$, we have
\begin{displaymath}
  \begin{array}{rll}
    (\we{D_h}v_h,v_h)
    &=\ninpro{\Pi_hv_h,\Pi_hv_h}{D_h}&\\
    &\sim\sum_{T\in T_h}\vertiii{\Pi_hv_h}^2_{h,\partial T}&\text{(by {\bf Assumption \ref{ass:d_h}})}\\
    &\sim\sum_{T\in T_h}\verti{v_h}^2_{1,T}&\text{(by \eqref{eq:500})}\\
    &\sim(\we{B_h}^{-1}v_h,v_h),&\text{(by {\bf Assumption \ref{ass:we B_h}})}
  \end{array}
\end{displaymath}
i.e.,
\begin{equation}\label{eq:equi append}
  (\we{D_h}v_h,v_h)\sim(\we{B_h}^{-1}v_h,v_h)~\text{ for all $v_h\in V^c_h$}.
\end{equation}

By the definition of $ B^G_h$, it holds, for any $\mu_h\in M_{h,k}$,
\begin{displaymath}
  \begin{split}
    \ninpro{ B^G_h D_h\mu_h,\mu_h}{D_h}
    &=\ninpro{S_h D_h\mu_h,\mu_h}{D_h} +
    (\we{B_h}\Pi_h^t D_h\mu_h,\Pi_h^t D_h\mu_h)\\
    &\lesssim\norm{\mu_h}^2_{D_h} +
    (\we{B_h}\Pi_h^tD_h\mu_h,\Pi_h^tD_h\mu_h)~~~~~~~\text{(by {\bf Assumption \ref{ass:S_h}})}\\
    &\lesssim\norm{\mu_h}^2_{D_h} +
    (\we{D_h}^{-1}\Pi_h^tD_h\mu_h,\Pi_h^tD_h\mu_h)
    ~~~~\text{(by \eqref{eq:equi append})}\\
    &\lesssim\norm{\mu_h}^2_{D_h} +
    \norm{\we{D_h}^{-1}\Pi_h^tD_h\mu_h}^2_{\we{D_h}},
  \end{split}
\end{displaymath}
which, together with
\begin{displaymath}
  \begin{split}
    \norm{\we{D_h}^{-1}\Pi_h^tD_h\mu_h}_{\we{D_h}}
    &=\sup_{v_h\in V^c_h}\frac{(\we{D_h}^{-1}\Pi_h^tD_h\mu_h,v_h)_{\we{D_h}}}{\norm{v_h}_{\we{D_h}}}\\
    &=\sup_{v_h\in V^c_h}\frac{(\mu_h,\Pi_hv_h)_{D_h}}{\norm{v_h}_{\we{D_h}}}\\
    &\leqslant\sup_{v_h\in V^c_h}\frac{\norm{\mu_h}_{D_h}\norm{\Pi_hv_h}_{D_h}}{\norm{v_h}_{\we{D_h}}}\\
    &=\norm{\mu_h}_{D_h},
  \end{split}
\end{displaymath}
yields
\begin{equation}
  \ninpro{B^G_hD_h\mu_h,\mu_h}{D_h}\lesssim\norm{\mu_h}^2_{D_h}~\text{ for all $\mu_h\in M_{h,k}$}.
\end{equation}
Thus it follows
\begin{equation}\label{eq:ubnd_psc append}
  \lam_{max}( B^G_hD_h) \lesssim 1.
\end{equation}

On the other hand, by Theorem 1 of \cite{Chen;2010}, we have, for any $\lam_h\in M_{h,k}$,
\begin{displaymath}
  \begin{split}
    &\quad\inpro{(B^G_h)^{-1}\lam_h,\lam_h}\\
    &=\inf_{\mu_h+\Pi_hv_h=\lam_h}\inpro{S_h^{-1}\mu_h,\mu_h} +
    (\we{ B_h}^{-1}v_h,v_h)\\
    &\leqslant\norm{(I-\Pi_hP_h)\lam_h}^2_{S_h^{-1}}+\norm{P_h\lam_h}^2_{\we{B_h}^{-1}}\\
    &\lesssim\sum_{T\in T_h}h_T^{-2}\norm{(I-\Pi_hP_h)\lam_h}^2_{h,\partial T} +
    \verti{P_h\lam_h}^2_{1,\Omega}~~~\text{(by {\bf Assumptions \ref{ass:S_h}-\ref{ass:we B_h}})}\\
    &\lesssim\norm{\lam_h}^2_{D_h},~~~~~\text{(by Lemma \ref{lem:P_h})}
  \end{split}
\end{displaymath}
which implies
\begin{equation}\label{eq:lbnd_psc append}
  \lam_{min}(B^G_hD_h)\gtrsim 1.
\end{equation}
As a result, the desired estimate \eqref{eq:cond_psc} follows immediately from \eqref{eq:ubnd_psc append} and \eqref{eq:lbnd_psc append}.
This finishes the proof.

\section{BPX preconditioner}
\subsection{Preconditioner construction}
Suppose we are given a coarse quasi-uniform triangulation $\mathcal T_0$. Then we obtain a nested
sequence of triangulations $\{\mathcal T_j:0\leqslant j\leqslant J\}$ through a successive refinement
process, i.e., $\mathcal T_j$ is the uniform refinement of $\mathcal T_{j-1}$ for $j = 1,2,\ldots,J$.
We use $h_j$ to denote the mesh size of $\mathcal T_j$, i.e., the maximum diameter of the simplexes in
$\mathcal T_j$. For each triangulation $\mathcal T_j$, we define $V^c_j$ by
\begin{equation}
  V^c_j:=\{v\in H_0^1(\Omega): v|_T\in P_1(T)~\text{for all $T\in\mathcal T_j$}\},
\end{equation}
and let $\{\phi_{j,i}:i=1,2,\cdots, N_j\}$ be the standard nodal basis of $V^c_j$, where $N_j$ is the
dimension of $V^c_j$. We set $\{\eta_i:i=1,2,\ldots,M\}$ to be the standard nodal basis of $\mathbb M_{h,k}$.
Set $h=h_J$, $\mathcal T_h=\mathcal T_J$ and $V^c_h=V^c_J$.

With the operators $\Pi_h$ (defined by \eqref{eq:def Pi_h^1} or \eqref{eq:def Pi_h^2}), $\Pi_h^t$ (defined by
\eqref{eq:def Pi_h^t}),  the nodal basis, $\{\phi_{j,i}:i=0,1,\ldots, N_j\}$, of $V^c_j$,
and the nodal basis, $\{\eta_i:i=1,2,\ldots,M\}$, of  $\mathbb M_{h,k}$, we define the BPX preconditioner
(in operator form) for the operator $D_h$ given in \eqref{def_D_h} as follows:
\begin{equation}\label{def_B_h}
  B_h\mu_h= h^{2-d}\sum_{i=1}^M\inpro{\mu_h,\eta_i}\eta_i+\sum_{(j,i)\in\Lambda}h_j^{2-d}
  (\Pi_h^t\mu_h,\phi_{j,i})\Pi_h\phi_{j,i}~\text{ for all $\mu_h\in\mathbb M_{h,k}$},
\end{equation}
where $\Lambda:=\{(j,i):0\leqslant j \leqslant J, 1\leqslant i \leqslant N_j\}$. It is trivial to verify
that $B_h$ is SPD with respect to $\inpro{\cdot,\cdot}$.
\begin{rem}
  We shall prove in the next subsection that both $\Pi_h^1$ and $\Pi_h^2$ lead to optimal preconditioners
  in the case $k\geqslant 1$, although  numerical results in Section \ref{sec:numer} show that $\Pi_h^2$
  is much more efficient. We note that $\Pi_h^2$ was also used in \cite{G_2009}, \cite{multigrid_hdg} and \cite{Li;Xie;2015;HDG}
  to construct multilevel methods for HDG methods.
\end{rem}
\subsection{Conditioning of \texorpdfstring{$B_hD_h$}{Lg}}
In this subsection, we shall use the framework of auxiliary space preconditioning   introduced in Section \ref{sec:aux_pre}
to analyze the BPX conditioning of $B_hD_h$.
For the sake of convenience, in this subsection we assume
\begin{displaymath}
  \begin{array}{rl}
    \mu_i\in\text{span}\{\eta_i\} & \text{for all } i=1,\cdots, M.
  \end{array}
\end{displaymath}

We define   two  SPD operators $S_h:M_{h,k}\to M_{h,k}$ and $\we{B_h}:V^c_h\to V^c_h$, respectively as follows:
\begin{equation}\label{eq:def S_h}
  S_h\mu_h:=h^{2-d}\sum_{i=1}^M\inpro{\mu_h,\eta_i}\eta_i~~\text{ for all $\mu_h\in M_{h,k}$},
\end{equation}
\begin{equation}\label{eq:def B_h}
  \we{B_h}v_h = \sum_{(j,i)\in\Lambda}h_j^{2-d}(v_h,\phi_{j,i})\phi_{j,i}~~\text{ for all $v_h\in V^c_h$}.
\end{equation}
Apparently we have
\begin{equation}
  B_h = S_h+\Pi_h\we{B_h}\Pi_h^t.
\end{equation}
Thus, according to Theorem \ref{thm:aux_pre}, to show   $\kappa(B_hD_h)\lesssim 1$ it suffices
to prove that $S_h$ and $\we{B_h}$ satisfy {\bf Assumption \ref{ass:S_h}} and {\bf Assumption \ref{ass:we B_h}},
respectively.
\begin{lem}
  The operator $S_h$ defined by \eqref{eq:def S_h} satisfies {\bf Assumption \ref{ass:S_h}}.
\end{lem}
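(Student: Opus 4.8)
The plan is to reduce both inequalities to elementary affine-scaling identities for the Gram matrix of the nodal basis of $\mathbb M_{h,k}$ with respect to $\inpro{\cdot,\cdot}$, and then to feed in the condition number bound \eqref{cond-Dh}. Since $\mathcal T_h=\mathcal T_J$ is produced from the quasi-uniform mesh $\mathcal T_0$ by successive uniform refinement, $\mathcal T_h$ is quasi-uniform, so $h_T\sim h$ for all $T\in\mathcal T_h$; in particular $\norm{\mu_h}^2_{h,\partial T}=h_T\norm{\mu_h}^2_{\partial T}\sim h\norm{\mu_h}^2_{\partial T}$, and it suffices to carry the single parameter $h$ through the estimates.

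The main step is to put $S_h$ in coordinates. Writing $\mu_h=\sum_{i=1}^M c_i\eta_i$ and setting $\mathbf M_h:=\big(\inpro{\eta_i,\eta_j}\big)_{1\le i,j\le M}$, the definition \eqref{eq:def S_h} shows that on coefficient vectors $S_h$ acts as $h^{2-d}\mathbf M_h$; hence $\inpro{S_h\mu_h,\mu_h}=h^{2-d}c^\top\mathbf M_h^2c$ and $\norm{\mu_h}^2_{S_h^{-1}}=\inpro{S_h^{-1}\mu_h,\mu_h}=h^{d-2}c^\top c$, while $c^\top\mathbf M_hc=\norm{\mu_h}^2_h$. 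The structural observation that makes $\mathbf M_h$ tractable is that every nodal basis function $\eta_i$ is supported on a single face $F\in\mathcal F_h$ (the space $\mathbb M_{h,k}$ imposes no inter-face continuity), so that $\inpro{\eta_i,\eta_j}=0$ unless $\eta_i$ and $\eta_j$ belong to the same face; consequently $\mathbf M_h$ is block diagonal with one block per face, the block of $F$ being $\big(\sum_{T:\,F\subset\partial T}h_T\big)$ times the $P_k(F)$ mass matrix. A standard scaling argument on the $(d-1)$-simplex $F$ of diameter $\sim h$ (shape regularity, fixed $k$) then gives that every eigenvalue of $\mathbf M_h$ is $\sim h^d$, and likewise $\sum_{T\in\mathcal T_h}\norm{\mu_h}^2_{\partial T}\sim\sum_{F\in\mathcal F_h}\norm{\mu_h}^2_F\sim h^{d-1}c^\top c$.

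With these facts both estimates are immediate. For \eqref{eq:ass S_h 1}: $c^\top\mathbf M_h^2c\le\lambda_{\max}(\mathbf M_h)\,c^\top\mathbf M_hc\sim h^d\norm{\mu_h}^2_h$, so $\inpro{S_h\mu_h,\mu_h}\lesssim h^2\norm{\mu_h}^2_h$; on the other hand \eqref{cond-Dh} gives $\lambda_{\max}(D_h)\lesssim h^{-2}$, whence $\inpro{D_h^{-1}\mu_h,\mu_h}\ge\lambda_{\max}(D_h)^{-1}\norm{\mu_h}^2_h\gtrsim h^2\norm{\mu_h}^2_h$, and combining the two yields $\inpro{S_h\mu_h,\mu_h}\lesssim\inpro{D_h^{-1}\mu_h,\mu_h}$. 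For \eqref{eq:ass S_h 2}: $\norm{\mu_h}^2_{S_h^{-1}}=h^{d-2}c^\top c\sim h^{d-2}\cdot h^{1-d}\sum_{T\in\mathcal T_h}\norm{\mu_h}^2_{\partial T}=h^{-1}\sum_{T\in\mathcal T_h}\norm{\mu_h}^2_{\partial T}\sim\sum_{T\in\mathcal T_h}h_T^{-1}\norm{\mu_h}^2_{\partial T}=\sum_{T\in\mathcal T_h}h_T^{-2}\norm{\mu_h}^2_{h,\partial T}$.

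I do not expect a genuine difficulty here; the only step that takes care is making the constants in $\lambda(\mathbf M_h)\sim h^d$ and in $\sum_T\norm{\mu_h}^2_{\partial T}\sim h^{d-1}c^\top c$ uniform in $h$, which is exactly where shape regularity of $\mathcal T_h$ and the fixed polynomial degree $k$ are used (the overcounting of interior faces by $\#\{T:F\subset\partial T\}\in\{1,2\}$ is harmless and absorbed into $\sim$). Conceptually the one point worth flagging is that \eqref{eq:ass S_h 1} must be routed through the global bound \eqref{cond-Dh} rather than through an attempt to compare $S_h$ with $D_h^{-1}$ directly.
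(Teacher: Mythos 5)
Your proof is correct and rests on the same core fact as the paper's: the spectral equivalence $\inpro{S_h^{-1}\mu_h,\mu_h}\sim h^{-2}\norm{\mu_h}^2_h$, obtained by a scaling argument on the nodal basis of $\mathbb M_{h,k}$. The paper reaches it through the subspace-correction infimum identity $\inpro{S_h^{-1}\mu_h,\mu_h}=\inf_{\sum_i\mu_i=\mu_h}\sum_i h^{d-2}\norm{\mu_i}^2_h/\norm{\eta_i}^2_h$ together with $\norm{\eta_i}_h\sim h^{d/2}$ and the near-orthogonality $\sum_i\norm{\mu_i}^2_h\sim\norm{\sum_i\mu_i}^2_h$; you reach it by noting that the Gram matrix $\mathbf M_h$ is block diagonal over faces with all eigenvalues $\sim h^d$, which is the same near-orthogonality made explicit (and the infimum is vacuous anyway, since the decomposition into $\mathrm{span}\{\eta_i\}$ is unique). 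From there both arguments deduce \eqref{eq:ass S_h 1} by routing through the bound $d_h(\mu_h,\mu_h)\lesssim h^{-2}\norm{\mu_h}^2_h$ --- exactly the point you flag at the end --- and \eqref{eq:ass S_h 2} by quasi-uniformity, so the two proofs differ only in presentation. One citation nit: \eqref{cond-Dh} is the condition-number bound $\kappa(D_h)\lesssim h^{-2}$ and does not by itself yield $\lambda_{\max}(D_h)\lesssim h^{-2}$; the one-sided bound you actually need is the upper inequality of Theorem \ref{thm_cond} (equivalently, Assumption \ref{ass:d_h} combined with the inverse estimate $\vertiii{\mu_h}^2_h\lesssim h^{-2}\norm{\mu_h}^2_h$), which is how the paper argues this step.
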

\begin{proof}
  For any $\mu_h\in M_{h,k}$, by using  the same technique as  in the proof of Lemma 2.4 in \cite{xu_2002}, it is easy
  to verify
  \begin{equation}\label{eq:S_h^{-1}}
    \inpro{S_h^{-1}\mu_h,\mu_h}
    =\inf_{\sum_i\mu_i=\mu_h}\sum_i\frac{h^{d-2}}{\norm{\eta_i}^2_h}\norm{\mu_i}^2_h.
  \end{equation}
  By a  standard scaling argument, it holds $\norm{\eta_i}_h\sim h^{\frac{d}{2}}$. Then  from \eqref{eq:S_h^{-1}}
  it follows
  \begin{equation}
    \inpro{S_h^{-1}\mu_h,\mu_h}\sim h^{-2}\inf_{\sum_i\mu_i=\mu_h}\sum_i\norm{\mu_i}^2_h.
  \end{equation}
  Further more, since $\mathcal T_h$ is shape regular, a standard scaling argument
  also yields   $\sum_i\norm{\mu_i}^2_h\sim\norm{\sum_i\mu_i}^2_h$. Thus it
  holds
  \begin{equation}\label{eq:S_h^{-1} 2}
    \inpro{S_h^{-1}\mu_h,\mu_h}\sim h^{-2}\norm{\mu_h}^2_h,
  \end{equation}
  and  the desired estimate  \eqref{eq:ass S_h 2} follows from \eqref{eq:S_h^{-1} 2} immediately by the   quasi-uniform assumption of $\mathcal T_h$.

  The thing left is  to prove \eqref{eq:ass S_h 1}. In fact, from
  \begin{displaymath}
    \begin{split}
      \inpro{D_h\mu_h,\mu_h}
      &\sim\vertiii{\mu_h}^2_h~~~~\text{ (by {\bf Assumption \ref{ass:d_h}})}\\
      &\lesssim h^{-2}\norm{\mu_h}^2_h\\
      &\lesssim  \inpro{S_h^{-1}\mu_h,\mu_h}, ~~~~\text{(by \eqref{eq:S_h^{-1} 2})}
    \end{split}
  \end{displaymath}
  which implies immediately $    \inpro{S_h\mu_h,\mu_h} \lesssim \inpro{D_h^{-1}\mu_h,\mu_h}.$ Hence, \eqref{eq:ass S_h 1} follows immediately.
\end{proof}

The following lemma    follows from Theorems 3.1-3.2 of \cite{xu_2009}.
\begin{lem}\label{lem44}
  The operator  $\we{B_h}$ defined by \eqref{eq:def B_h} satisfies {\bf Assumption \ref{ass:we B_h}}.
\end{lem}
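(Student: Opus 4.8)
The plan is to recognize the asserted estimate as the classical BPX spectral equivalence for $P_1$ conforming elements on the nested quasi-uniform family $\{\mathcal T_j\}_{j=0}^J$, so that it follows from Theorems 3.1--3.2 of \cite{xu_2009} once the hypotheses are matched; the only substantive work is to rewrite $(\we{B_h}^{-1}\cdot,\cdot)$ in the form to which that theory applies.

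\emph{Step 1 (infimum representation).} Since $\we{B_h}=\sum_{(j,i)\in\Lambda}h_j^{2-d}(\cdot,\phi_{j,i})\phi_{j,i}$ is a sum of scaled rank-one operators attached to the one-dimensional subspaces $\text{span}\{\phi_{j,i}\}$, Theorem 1 of \cite{Chen;2010} (applied exactly as in the derivation of \eqref{eq:S_h^{-1}}) gives
\[
  (\we{B_h}^{-1}v_h,v_h)=\inf\Big\{\textstyle\sum_{(j,i)\in\Lambda}\tfrac{h_j^{d-2}}{\norm{\phi_{j,i}}^2}\norm{v_{j,i}}^2:\ v_{j,i}\in\text{span}\{\phi_{j,i}\},\ \sum_{(j,i)\in\Lambda}v_{j,i}=v_h\Big\}.
\]
Using $\norm{\phi_{j,i}}^2\sim h_j^d$ by scaling, together with the spectral equivalence of the $P_1$ mass matrix on a shape-regular mesh with $h_j^d$ times the identity (which yields $\sum_i\norm{v_{j,i}}^2\sim\norm{w_j}^2$ for $w_j:=\sum_i v_{j,i}\in V^c_j$), the inner sum over $i$ collapses and the representation simplifies to
\[
  (\we{B_h}^{-1}v_h,v_h)\sim\inf\Big\{\textstyle\sum_{j=0}^J h_j^{-2}\norm{w_j}^2:\ w_j\in V^c_j,\ \sum_{j=0}^J w_j=v_h\Big\}.
\]

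\emph{Step 2 (BPX equivalence).} It then remains to show the right-hand side is $\sim\verti{v_h}^2_{1,\Omega}$. For the lower bound I would take an arbitrary admissible decomposition $v_h=\sum_j w_j$, apply the inverse inequality $\verti{w_j}_{1,\Omega}\lesssim h_j^{-1}\norm{w_j}$ on the quasi-uniform mesh $\mathcal T_j$, and combine it with a strengthened Cauchy--Schwarz inequality with geometric decay in $|j-k|$ across the nested levels $V^c_0\subset\cdots\subset V^c_J$ (available since $\mathcal T_j$ is the uniform refinement of $\mathcal T_{j-1}$) in order to sum the double series $\sum_{j,k}(\bmnabla w_j,\bmnabla w_k)$ without losing a factor of $J$. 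For the upper bound I would use the canonical decomposition $w_j:=(Q_j-Q_{j-1})v_h$, where $Q_j:L^2(\Omega)\to V^c_j$ is the $L^2$-projection and $Q_{-1}:=0$, so that $\sum_j w_j=Q_Jv_h=v_h$; the approximation estimate $\norm{(I-Q_{j-1})v}\lesssim h_j\verti{v}_{1,\Omega}$ together with the $H^1$-stability of $Q_j$ then gives $\sum_j h_j^{-2}\norm{w_j}^2\lesssim\verti{v_h}^2_{1,\Omega}$.

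The only genuinely delicate ingredients are the two just named --- the geometric-decay strengthened Cauchy--Schwarz inequality and, above all, the $H^1$-stability of the projections $Q_j$ --- and they are exactly where the quasi-uniformity of the refinement sequence is used; for a general shape-regular family the upper bound would fail. Since both are standard and are precisely what is packaged in Theorems 3.1--3.2 of \cite{xu_2009} for $P_1$ conforming elements with the weights $h_j^{2-d}$, in the write-up I would, after carrying out Step 1, simply note that $\we{B_h}$ fits that framework and invoke those theorems, as the statement of Lemma \ref{lem44} already anticipates.
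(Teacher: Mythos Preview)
Your proposal is correct and lands exactly where the paper does: the paper offers no argument beyond the sentence ``follows from Theorems~3.1--3.2 of \cite{xu_2009}'', and you conclude by invoking the same theorems after supplying the reduction (your Step~1) that the paper leaves implicit. One small caution on your Step~2 sketch: the ingredients you name for the upper bound --- the approximation estimate $\norm{(I-Q_{j-1})v}\lesssim h_j\verti{v}_{1,\Omega}$ together with $H^1$-stability of $Q_j$ --- applied termwise give only $\sum_j h_j^{-2}\norm{w_j}^2\lesssim J\,\verti{v_h}^2_{1,\Omega}$; eliminating the factor $J$ is precisely the nontrivial content of the cited theorems, so you are right to defer to them rather than claim the bound follows directly from those two estimates.
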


Finally, thanks to Theorem \ref{thm:aux_pre}, we obtain immediately the following main conclusion for the BPX preconditioning.
\begin{thm}\label{thm_cond_BD}
  Under {\bf Assumption \ref{ass:d_h}}, it holds
  \begin{equation}
    \kappa(B_hD_h)\lesssim 1,
  \end{equation}
  where $D_h$ and $B_h$ are defined by \eqref{def_D_h} and \eqref{def_B_h},
  respectively.
\end{thm}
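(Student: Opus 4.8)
The plan is to reduce Theorem~\ref{thm_cond_BD} to the already-proven Theorem~\ref{thm:aux_pre} by verifying that the two operators $S_h$ and $\we{B_h}$ defined in \eqref{eq:def S_h} and \eqref{eq:def B_h} satisfy {\bf Assumptions \ref{ass:S_h}} and {\bf \ref{ass:we B_h}}, respectively. Indeed, since by construction $B_h = S_h + \Pi_h\we{B_h}\Pi_h^t$, once these two assumptions are checked the conclusion $\kappa(B_hD_h)\lesssim 1$ is immediate from Theorem~\ref{thm:aux_pre}; note also that {\bf Assumption \ref{ass:d_h}} is in force by hypothesis. So the real content lies entirely in the two preparatory lemmas, which in the excerpt are stated (and the first is proved) just above the theorem.

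For $S_h$, the key is the variational characterization \eqref{eq:S_h^{-1}} of $\inpro{S_h^{-1}\cdot,\cdot}$ as an infimum over decompositions $\mu_h=\sum_i\mu_i$ with $\mu_i\in\mathrm{span}\{\eta_i\}$, obtained as in Lemma~2.4 of \cite{xu_2002}. Using the scaling estimate $\norm{\eta_i}_h\sim h^{d/2}$ and the shape-regularity fact $\sum_i\norm{\mu_i}_h^2\sim\norm{\sum_i\mu_i}_h^2$ for single-basis-function decompositions, this infimum collapses to $h^{-2}\norm{\mu_h}_h^2$, which by quasi-uniformity of $\mathcal T_h$ dominates $\sum_{T\in\mathcal T_h}h_T^{-2}\norm{\mu_h}_{h,\partial T}^2$; that gives \eqref{eq:ass S_h 2}. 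For \eqref{eq:ass S_h 1}, combine {\bf Assumption \ref{ass:d_h}} with the trivial bound $\vertiii{\mu_h}_h^2\lesssim h^{-2}\norm{\mu_h}_h^2$ and \eqref{eq:S_h^{-1} 2} to get $\inpro{D_h\mu_h,\mu_h}\lesssim\inpro{S_h^{-1}\mu_h,\mu_h}$, which inverts to the required spectral inequality $\inpro{S_h\mu_h,\mu_h}\lesssim\inpro{D_h^{-1}\mu_h,\mu_h}$.

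For $\we{B_h}$, the operator in \eqref{eq:def B_h} is precisely the classical BPX preconditioner for the $H_0^1$-conforming piecewise-linear Laplacian on the nested sequence $\{\mathcal T_j\}_{0\le j\le J}$, built from the nodal bases with the standard scaling weights $h_j^{2-d}$. The spectral equivalence $(\we{B_h}^{-1}v_h,v_h)\sim|v_h|_{1,\Omega}^2$ on $V^c_h$ is exactly the statement of the BPX theory for quasi-uniform refinements of a quasi-uniform coarse mesh, so I would simply invoke Theorems~3.1--3.2 of \cite{xu_2009} (equivalently, the original result of \cite{BPX}) to conclude {\bf Assumption \ref{ass:we B_h}}. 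No new estimate is needed here beyond citing that machinery; the nestedness of the $V^c_j$ (unlike the nonnested $\mathbb M_{h,k}$ spaces) is what makes the standard theory directly applicable.

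The main obstacle — and it has essentially been dispatched already in the preceding sections — is not in this final theorem at all but in Theorem~\ref{thm:aux_pre} and Lemma~\ref{lem:P_h}, namely constructing the stable decomposition operator $P_h:\mathbb M_{h,k}\to V^c_h$ and controlling $\norm{(I-\Pi_hP_h)\lambda_h}_{h,\partial T}$ under only shape-regularity of $\mathcal T_h$. Given those results, the proof of Theorem~\ref{thm_cond_BD} is a short bookkeeping exercise: verify the two assumptions (the $S_h$ one by the scaling computation above, the $\we{B_h}$ one by citation) and apply Theorem~\ref{thm:aux_pre}. The only point requiring a little care is keeping track of where quasi-uniformity of the global mesh $\mathcal T_h=\mathcal T_J$ is used (in the $S_h$ estimate and implicitly in the BPX theory for $\we{B_h}$) versus where mere shape-regularity suffices (the auxiliary-space framework of Section~\ref{sec:aux_pre}), but since the BPX setup already assumes a quasi-uniform coarse mesh refined uniformly, all meshes $\mathcal T_j$ are quasi-uniform and this causes no difficulty.
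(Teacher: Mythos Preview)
Your proposal is correct and follows essentially the same route as the paper: decompose $B_h=S_h+\Pi_h\we{B_h}\Pi_h^t$, verify {\bf Assumption~\ref{ass:S_h}} for $S_h$ via the variational characterization \eqref{eq:S_h^{-1}} and the scaling/quasi-uniformity arguments you describe, verify {\bf Assumption~\ref{ass:we B_h}} for $\we{B_h}$ by invoking the classical BPX theory (Theorems~3.1--3.2 of \cite{xu_2009}), and then apply Theorem~\ref{thm:aux_pre}. Your identification of which ingredients require quasi-uniformity versus mere shape-regularity is also accurate and matches the paper's organization.
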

\subsection{Implementation}
We recall that $\{\eta_i:1\leqslant i\leqslant M\}$ is the standard nodal basis of $\mathbb M_{h,k}$
and $\{\phi_{j,i}:i=0,1,\ldots, N_j\}$ is the standard nodal basis of $V^c_j$ for $j=0,1,\cdots,J$.
For each $\mu_h\in \mathbb M_{h,k}$, we use $\widetilde\mu_h\in\mathbb R^M$ to denote the vector of
coefficients of $\mu_h$ with respect to the basis $\{\eta_1,\eta_2,\ldots,\eta_M\}$. Let
$\mathcal D_h\in\mathbb R^{M\times M}$ be the stiffness matrix with respective to the operator $D_h$
defined in \eqref{def_D_h} with
\begin{displaymath}
  \widetilde\lam_h^{T}\mathcal D_h\widetilde\mu_h:=
  \inpro{D_h\mu_h,\eta_h}~~~\text{ for all $\lam_h,\mu_h\in \mathbb M_{h,k}$}.
\end{displaymath}
Then it follows from Theorem \ref{thm_cond}, or the estimate \eqref{cond-Dh},  that
\begin{displaymath}
  \kappa (\mathcal D_h) \lesssim h^{-2}.
\end{displaymath}
By the definition, \eqref{eq:def Pi_h^1}, of $\Pi_h$, there exists a matrix $\mathcal I_j\in\mathcal R^{M\times N_j}$
for $j=0,1,\cdots,J$, such that
\begin{equation}\label{I_j}
  \Pi_h(\phi_{j,1},\phi_{j,2},\ldots,\phi_{j,N_j}) = (\eta_1,\eta_2,\ldots,\eta_M)\mathcal I_j.
\end{equation}
We set $\mathcal I_h\in\mathbb R^{M\times M}$ to be the identity matrix.
From the definition, \eqref{def_B_h}, of $B_h$, it follows,  for any $\mu_h\in \mathbb M_{h,k}$,
\begin{displaymath}
  \begin{split}
    B_hD_h\mu_h &= h^{2-d}\sum_{i=1}^M\inpro{D_h\mu_h,\eta_i}\eta_i+\sum_{(j,i)\in\Lambda}h_j^{2-d}
    (\Pi_h^tD_h\mu_h,\phi_{j,i})\Pi_h\phi_{j,i}\\
    &= h^{2-d}\sum_{i=1}^M\inpro{D_h\mu_h,\eta_i}\eta_i + \sum_{(j,i)\in\Lambda}h_j^{2-d}
    \inpro{D_h\mu_h,\Pi_h\phi_{j,i}}\Pi_h\phi_{j,i}.
  \end{split}
\end{displaymath}
Thus, in view of \eqref{I_j}, we have
\begin{equation}
  \widetilde{B_hD_h\mu_h} = \mathcal B_h\mathcal D_h\widetilde\mu_h,~\forall\mu_h\in \mathbb M_{h,k},
\end{equation}
where $\mathcal B_h$, a preconditioner for $\mathcal D_h$, is given by
\begin{equation}\label{def_mat_B_h}
  \mathcal B_h = h^{2-d}\mathcal I_h + \sum_{k=0}^Jh_j^{2-d}\mathcal I_j\mathcal I_j^t.
\end{equation}
From Theorem \ref{thm_cond_BD} it follows
\begin{equation}\label{cond-BD}
  \kappa(\mathcal B_h\mathcal D_h) \lesssim 1.
\end{equation}
This means that the matrix $\mathcal B_h$ is an optimal preconditioner for the stiffness matrix
$\mathcal D_h$.
\section{Applications}\label{sec_appl}
We begin by introducing some notation. For any $T\in\mathcal T_h$, let $V(T)$ and $\bm W(T)$ be two local
finite dimensional spaces. Define
\begin{displaymath}
  \begin{split}
    V_h &:= \{v\in L^2(\Omega): v_h|_T \in V(T)\text{ for all $T\in\mathcal T_h$}\},\\
    \bm W_h &:= \{\bm\tau\in [L^2(\Omega)]^d: \bm\tau_h|_T \in \bm W(T)\text{ for all $T\in\mathcal T_h$}\}.
  \end{split}
\end{displaymath}
Then we introduce another local space as follows:
$$M(\partial T):=\left\{\mu \in L^2(\partial T): \mu |_F \in P_k(F) \text{ for all $F\in\mathcal F_T$}\right\}.$$
We recall
\begin{equation}
  \mathbb M_{h,k}:=\{\mu_h\in L^2( \mathcal F_h):\mu_h|_F\in P_k(F)~\text{ for all $F\in\mathcal F_h$
    and $\mu_h|_{\partial\Omega}=0$}\}.
\end{equation}
For the sake of clarity, in what follows we assume $g=0$ for the model problem \eqref{eq:model}.

\subsection{Hybridized discontinuous Galerkin method}\label{ssec:HDG}
The general framework of HDG method for the problem
\eqref{eq:model} reads as follows (\cite{unified_hdg}): Seek
$(u_h,\lambda_h,\bm\sigma_h) \in V_h \times \mathbb M_{h,k} \times \bm W_h$ such that
\begin{subequations}\label{discretization_hdg}
  \begin{eqnarray}
    (\bm C\bmsigma_h,\bm\tau_h)+(u_h,\text{div}_h\bm\tau_h)-\bndint{\lambda_h,\bm\tau_h\cdot\bm n} &=& 0,\label{discrete1} \\
    -(v_h,\text{div}_h\bm\sigma_h) + \bndint{\alpha_T(P_T^{\partial}u_h - \lambda_h), v_h} &=& (f, v_h), \label{discrete2}\\
    \bndint{\bm\sigma_h\cdot\bm n - \alpha_T(P_T^{\partial}u_h-\lambda_h),\mu_h} &=& 0\label{discrete3}
  \end{eqnarray}
\end{subequations}
hold for all $(v_h,\mu_h,\bm\tau_h)\in V_h\times \mathbb M_{h,k} \times \bm W_h$, where $\bm C = \bm A^{-1}$, $\text{div}_h$
is the broken $\text{div}$ operator with respective to the triangulation $\mathcal T_h$, $\bm n$ denotes the unit outward normal
of $T$, $P_T^{\partial}:H^1(T)\to M(\partial T)$ denotes the standard $L^2$-orthogonal projection operator,
and $\alpha_T$ denotes a nonnegative penalty function defined on $\partial T$.

For any $T\in\mathcal T_h$, we introduce two local problems as follows.
\par
{\bf Local problem 1}: For any given $\lam\in L^2(\partial T)$, seek $(u_{\lam},\bmsigma_{\lam})\in V(T)\times \bm W(T)$
such that
\begin{subequations}\label{eq:local_1 hdg}
  \begin{eqnarray}
    (\bm C\bmsigma_{\lam},\bm\tau)_T+(u_{\lam},\text{div}\bm\tau)_T\!\! &\!\!=\!\!&\!\!
    \bint{\lam,\bm\tau\cdot\bm n}{\partial T},\label{eq:local_1 hdg_a}\\
    -(v,\text{div}\bmsigma_{\lam})_T + \bint{\alpha_TP_T^{\partial}u_{\lam},v}{\partial T}\!\!&\!\!=\!\!&\!\!
    \bint{\alpha_T\lam, v}{\partial T},\label{eq:local_1 hdg_b}
  \end{eqnarray}
\end{subequations}
hold for all $(v,\bm\tau)\in V(T)\times \bm W(T)$.

{\bf Local problem 2}: For any given $f\in L^2(T)$, seek $(u_f,\bmsigma_f)\in V(T)\times \bm W(T)$ such that
\begin{subequations}\label{eq:local2 hdg}
  \begin{eqnarray}
    (\bm C\bmsigma_f,\bm\tau)_T+(u_f,\text{div}\bm\tau)_T\!\! &\!\!=\!\!&\!\! 0,\label{eq:local2 hdg_a}\\
    -(v,\text{div}\bmsigma_f)_T + \bint{\alpha_TP_T^{\partial}u_f,v}{\partial T}\!\!&\!\!=\!\!&\!\! (f,v)_T,\label{eq:local2 hdg_b}
  \end{eqnarray}
\end{subequations}
hold for all $(v,\bm\tau)\in V(T)\times \bm W(T)$.\par
\begin{thm} \cite{unified_hdg}\label{global-local}
  Suppose $(u_h,\lam_h,\bmsigma_h)\in V_h\times \mathbb M_{h,k} \times \bm W_h$ to be the solution
  to the system \eqref{discretization_hdg}, and suppose, for any $T\in \mathcal T_h$, $(u_{\lam_h},
  \bmsigma_{\lam_h})|_T\in V(T)\times\bm W(T)$ and $(u_{f},\bmsigma_{f})|_T\in V(T)\times \bm W(T)$
  to be the solutions to the local problems
  \eqref{eq:local_1 hdg}  (by replacing $\lam$ with $\lam_h$) and \eqref{eq:local2 hdg}, respectively. Then it holds
  \begin{eqnarray}
    \bmsigma_h\!\! &\!=\!&\!\! \bmsigma_{\lam_h} + \bmsigma_f,\\
    u_h\!\!&\!=\!&\!\! u_{\lam_h} + u_f,
  \end{eqnarray}
  and $\lam_h\in \mathbb M_{h,k}$ is the solution to the system \eqref{eq:sys}, i.e.
  \begin{equation*}
    d_h(\lam_h,\mu_h) = b_h(\mu_h)~~~\text{ for all $\mu_h \in \mathbb M_{h,k}$},
  \end{equation*}
  where
  \begin{eqnarray}
    d_h(\lam_h,\mu_h)\!&\!\!:=\!\!&\!(\bm C\bmsigma_{\lam_h},\bmsigma_{\mu_h}) + \bndint{\alpha_T(P_T^{\partial}u_{\lam_h}-\lam_h),
      P_T^{\partial}u_{\mu_h}-\mu_h},\label{def_d_h_hdg}\\
    b_h(\mu_h)\!&\!\!:=\!\!&\!(f,u_{\mu_h}),
  \end{eqnarray}
  and, for any $T\in\mathcal T_h$, $(u_{\mu_h},\bmsigma_{\mu_h})|_T\in V(T)\times\bm W(T)$ denotes the solution
  to the local problem \eqref{eq:local_1 hdg} by replacing $\lam$ with $\mu_h$.
\end{thm}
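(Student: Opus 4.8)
The plan is to establish the decomposition $\bmsigma_h = \bmsigma_{\lam_h} + \bmsigma_f$ and $u_h = u_{\lam_h} + u_f$ first, and then derive the reduced SPD system for $\lam_h$ as a consequence. The key observation is that the full HDG system \eqref{discretization_hdg} decouples once $\lam_h$ is fixed: equations \eqref{discrete1} and \eqref{discrete2}, restricted to a single element $T$ and tested against $(\bm\tau_h,v_h)$ supported in $T$, are exactly the local problems \eqref{eq:local_1 hdg} and \eqref{eq:local2 hdg} with right-hand sides split according to the linear data $\lam_h$ and $f$. So first I would verify, using the linearity of the local problems, that $(u_{\lam_h}+u_f, \bmsigma_{\lam_h}+\bmsigma_f)$ satisfies \eqref{discrete1} and \eqref{discrete2} element by element; this requires checking that the sum of \eqref{eq:local_1 hdg_a} and \eqref{eq:local2 hdg_a} reproduces \eqref{discrete1}$|_T$ and likewise for the second equations. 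One must be a little careful about the term $\bint{\alpha_T P_T^\partial u_{\lam},v}{\partial T} = \bint{\alpha_T P_T^\partial u_{\lam_h},v}{\partial T}$ versus the $\bint{\alpha_T\lam,v}{\partial T}$ appearing on the right of \eqref{eq:local_1 hdg_b}: collecting the $\alpha_T$-terms one recovers $\bint{\alpha_T(P_T^\partial u_h - \lam_h),v}{\partial T}$ on the left of \eqref{discrete2}.

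Next I would assume well-posedness of the local problems (which holds under the standard HDG stability hypotheses on $V(T)$, $\bm W(T)$ and $\alpha_T$), so that the local solutions are uniquely determined by $\lam_h$ and $f$; by uniqueness of the solution to \eqref{discretization_hdg}, the claimed identities $\bmsigma_h = \bmsigma_{\lam_h}+\bmsigma_f$, $u_h = u_{\lam_h}+u_f$ follow. Then I would substitute these into the remaining ``conservativity'' equation \eqref{discrete3}. Writing $\bmsigma_h\cdot\bm n - \alpha_T(P_T^\partial u_h - \lam_h) = (\bmsigma_{\lam_h}\cdot\bm n - \alpha_T(P_T^\partial u_{\lam_h}-\lam_h)) + (\bmsigma_f\cdot\bm n - \alpha_T P_T^\partial u_f)$ and pairing with $\mu_h\in\mathbb M_{h,k}$ gives a bilinear form in $\lam_h$ plus a linear functional in $f$. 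Identifying the former with $d_h(\lam_h,\mu_h)$ and the latter with $-b_h(\mu_h)$ is the crux.

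To get the symmetric expressions \eqref{def_d_h_hdg}, the trick is the standard ``energy'' manipulation: test \eqref{eq:local_1 hdg_a} (with datum $\lam_h$) against $\bm\tau = \bmsigma_{\mu_h}$ and \eqref{eq:local_1 hdg_b} against $v = u_{\mu_h}$, then test the $\mu_h$-versions against $\bmsigma_{\lam_h}$ and $u_{\lam_h}$ respectively, and subtract/add appropriately so that the volume terms $(u_{\lam_h},\mathrm{div}\,\bmsigma_{\mu_h})_T$ and $(u_{\mu_h},\mathrm{div}\,\bmsigma_{\lam_h})_T$ cancel. This yields
\begin{displaymath}
  \bndint{\mu_h,\bmsigma_{\lam_h}\cdot\bm n} + \bndint{\alpha_T(P_T^\partial u_{\lam_h}-\lam_h),\mu_h}
  = (\bm C\bmsigma_{\lam_h},\bmsigma_{\mu_h}) + \bndint{\alpha_T(P_T^\partial u_{\lam_h}-\lam_h),P_T^\partial u_{\mu_h}-\mu_h},
\end{displaymath}
which is precisely $d_h(\lam_h,\mu_h)$, manifestly symmetric. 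A parallel computation using \eqref{eq:local2 hdg} against $(\bmsigma_{\mu_h},u_{\mu_h})$ turns the $f$-contribution in \eqref{discrete3} into $-(f,u_{\mu_h}) = -b_h(\mu_h)$, so \eqref{discrete3} becomes $d_h(\lam_h,\mu_h) = b_h(\mu_h)$. Finally, positive-definiteness of $d_h$ on $\mathbb M_{h,k}$ follows from the fact that $d_h(\mu_h,\mu_h) = (\bm C\bmsigma_{\mu_h},\bmsigma_{\mu_h}) + \bndint{\alpha_T(P_T^\partial u_{\mu_h}-\mu_h),P_T^\partial u_{\mu_h}-\mu_h} \geqslant 0$ with equality forcing $\bmsigma_{\mu_h}=0$ and $P_T^\partial u_{\mu_h}=\mu_h$, which together with the local equations and the boundary condition $\mu_h|_{\partial\Omega}=0$ forces $\mu_h=0$; this part uses the specific structure of $V(T),\bm W(T)$ and is where I expect the main technical obstacle to lie, since it is exactly the point at which the abstract framework must make contact with a concrete admissible HDG triple.
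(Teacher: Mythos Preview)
The paper does not actually prove this theorem: it is stated with the citation \cite{unified_hdg} and no proof environment follows. So there is no ``paper's own proof'' to compare against; the result is imported wholesale from Cockburn--Gopalakrishnan--Lazarov.

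Your outline is the standard derivation from that reference and is essentially correct. One small slip: in your displayed identity the sign on the penalty term on the left should be negative, not positive. From \eqref{discrete3} the $\lam_h$-contribution is $\bndint{\bmsigma_{\lam_h}\cdot\bm n - \alpha_T(P_T^\partial u_{\lam_h}-\lam_h),\mu_h}$, and testing the $\mu_h$-version of \eqref{eq:local_1 hdg_a} with $\bmsigma_{\lam_h}$ together with the $\lam_h$-version of \eqref{eq:local_1 hdg_b} with $u_{\mu_h}$ gives
\[
\bndint{\mu_h,\bmsigma_{\lam_h}\cdot\bm n} - \bndint{\alpha_T(P_T^\partial u_{\lam_h}-\lam_h),\mu_h}
= (\bm C\bmsigma_{\lam_h},\bmsigma_{\mu_h}) + \bndint{\alpha_T(P_T^\partial u_{\lam_h}-\lam_h),P_T^\partial u_{\mu_h}-\mu_h},
\]
which is $d_h(\lam_h,\mu_h)$. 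Likewise the $f$-contribution $\bndint{\bmsigma_f\cdot\bm n - \alpha_T P_T^\partial u_f,\mu_h}$ equals $-(f,u_{\mu_h})$ after testing \eqref{eq:local2 hdg} with $(\bmsigma_{\mu_h},u_{\mu_h})$. With that sign fixed, your argument goes through. Your closing remark is also apt: the strict positive-definiteness of $d_h$ is precisely where the admissibility of the triple $(V(T),\bm W(T),\alpha_T)$ enters, and in the paper this is handled not here but later, method by method, via {\bf Assumption~\ref{ass:d_h}} (e.g.\ Lemma~\ref{lem51} for {\bf Type 4}).
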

\par

We list four types of HDG methods as follows. 
\begin{description}
  \item[Type 1. ]  $V(T) = P_k(T)$, $\bm W(T) = [P_k(T)]^d + P_k(T)\bm x$ and $\alpha_T = 0$.
  \item[Type 2. ] $V(T) = P_{k-1}(T)~(k\geqslant 1)$, $\bm W(T) = [P_k(T)]^d$ and $\alpha_T = 0$.
  \item[Type 3. ] $V(T) = P_k(T)$, $\bm W(T) = [P_k(T)]^d$ and $\alpha_T = O(1)$.
  \item[Type 4. ] $V(T) = P_{k+1}(T)$, $\bm W(T) = [P_k(T)]^d$ and $\alpha_T = O(h_T^{-1})$.
\end{description}

{\bf Type 1} HDG method turns out to be the well-known hybridized RT mixed element method (\cite{ArnoldBrezzi1985}), and
{\bf Type 2} HDG method turns out to be the well-known hybridized BDM mixed element method (\cite{BrezziDouglasMarini1985}).
For both {\bf Types 1-2}  HDG methods, it was shown in \cite{schwarz_pre} that {\bf Assumption \ref{ass:d_h}} holds.

{\bf Type 3} HDG method was proposed in \cite{unified_hdg} and analyzed in \cite{projection_based_hdg}.  In
\cite{multigrid_hdg} it was shown  that Assumption \ref{ass:d_h}  holds for this method.

{\bf Type 4} HDG method was proposed and analyzed in \cite{Li-Xie2014}. The proof of {\bf Assumption \ref{ass:d_h}} can also be found there.  For completeness we sketch the proof here.

\begin{lem}\label{lem51}
  Let $T\in\mathcal T_h$. For any given $\lam\in M(\partial T)$, it holds
  \begin{equation}\label{eq:367}
    (\bm C^{-1}\bmsigma_{\lam},\bmsigma_{\lam})_T+\bint{\alpha_T(P_T^{\partial}u_{\lam}-\lam),P_T^{\partial}u_{\lam}-\lam}{\partial T}
    \sim\vertiii{\lam}^2_{h,\partial T}.
  \end{equation}
\end{lem}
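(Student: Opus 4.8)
The statement \eqref{eq:367} is the element-by-element version of {\bf Assumption \ref{ass:d_h}}: writing $E_T(\lam)$ for its left-hand side, summing over $T\in\mathcal T_h$ (with $\lam=\lam_h|_{\partial T}$ and $u_{\lam_h},\bmsigma_{\lam_h}$ the corresponding local solutions) yields $d_h(\lam_h,\lam_h)\sim\vertiii{\lam_h}^2_h$. Two preliminary reductions streamline the proof. First, since $\bm C=\bm A^{-1}$ is uniformly SPD, $(\bm C^{-1}\bmsigma_\lam,\bmsigma_\lam)_T\sim\norm{\bmsigma_\lam}^2_T\sim(\bm C\bmsigma_\lam,\bmsigma_\lam)_T$, so it suffices to bound $(\bm C\bmsigma_\lam,\bmsigma_\lam)_T+\bint{\alpha_T(P_T^{\partial}u_\lam-\lam),P_T^{\partial}u_\lam-\lam}{\partial T}$. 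Second, replacing $\lam$ by $\lam+c$ for a constant $c$ changes the solution of \eqref{eq:local_1 hdg} only by $(u_\lam,\bmsigma_\lam)\mapsto(u_\lam+c,\bmsigma_\lam)$ — a consequence of the divergence theorem in \eqref{eq:local_1 hdg_a} and of $P_0(T)\subseteq V(T)$ in \eqref{eq:local_1 hdg_b} — so $E_T$ and $\vertiii{\cdot}^2_{h,\partial T}$ depend on $\lam$ only through $\lam-m_T(\lam)$; I may therefore assume $m_T(\lam)=0$, so that $\vertiii{\lam}^2_{h,\partial T}=h_T^{-1}\norm{\lam}^2_{\partial T}$.

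The upper bound $E_T(\lam)\lesssim\vertiii{\lam}^2_{h,\partial T}$ I would obtain from the natural energy identity: taking $\bm\tau=\bmsigma_\lam$ in \eqref{eq:local_1 hdg_a} and $v=u_\lam$ in \eqref{eq:local_1 hdg_b} and adding, the terms $\pm(u_\lam,\text{div}\bmsigma_\lam)_T$ cancel and, using $\alpha_TP_T^{\partial}u_\lam,\alpha_T\lam\in M(\partial T)$,
\begin{equation*}
  (\bm C\bmsigma_\lam,\bmsigma_\lam)_T+\norm{\alpha_T^{1/2}P_T^{\partial}u_\lam}^2_{\partial T}
  =\bint{\lam,\bmsigma_\lam\cdot\bm n}{\partial T}+\bint{\alpha_T\lam,P_T^{\partial}u_\lam}{\partial T}.
\end{equation*}
Estimating the right-hand side by Cauchy--Schwarz, the polynomial inverse/trace inequality $\norm{\bmsigma_\lam\cdot\bm n}_{\partial T}\lesssim h_T^{-1/2}\norm{\bmsigma_\lam}_T$, the uniform positive-definiteness of $\bm C$ and $\alpha_T\sim h_T^{-1}$, and absorbing $\norm{\bmsigma_\lam}_T^2$ and $\norm{\alpha_T^{1/2}P_T^{\partial}u_\lam}^2_{\partial T}$ into the left-hand side, gives $(\bm C\bmsigma_\lam,\bmsigma_\lam)_T+\norm{\alpha_T^{1/2}P_T^{\partial}u_\lam}^2_{\partial T}\lesssim h_T^{-1}\norm{\lam}^2_{\partial T}$; a triangle inequality together with $\norm{\alpha_T^{1/2}\lam}^2_{\partial T}\sim h_T^{-1}\norm{\lam}^2_{\partial T}$ then yields $E_T(\lam)\lesssim h_T^{-1}\norm{\lam}^2_{\partial T}=\vertiii{\lam}^2_{h,\partial T}$.

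For the lower bound I would first pin down the null space of $E_T$. If $E_T(\lam)=0$ then $\bmsigma_\lam=0$ (as $\bm C$ is SPD) and $P_T^{\partial}u_\lam=\lam$ (as $\alpha_T>0$). Setting $\bmsigma_\lam=0$ in \eqref{eq:local_1 hdg_a} and integrating by parts gives $(\bmnabla u_\lam,\bm\tau)_T=\bint{u_\lam-\lam,\bm\tau\cdot\bm n}{\partial T}$ for all $\bm\tau\in\bm W(T)=[P_k(T)]^d$; on each face $F$ one has $(\bm\tau\cdot\bm n)|_F\in P_k(F)$, while $P_T^{\partial}u_\lam=\lam$ means $u_\lam-\lam\perp P_k(F)$ on $F$, so the right-hand side vanishes and $(\bmnabla u_\lam,\bm\tau)_T=0$ for all $\bm\tau\in[P_k(T)]^d$. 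Since $u_\lam\in V(T)=P_{k+1}(T)$ forces $\bmnabla u_\lam\in[P_k(T)]^d$, choosing $\bm\tau=\bmnabla u_\lam$ gives $u_\lam\equiv\text{const}$, hence $\lam=P_T^{\partial}u_\lam$ equals that constant: the kernel of $E_T$ on $M(\partial T)$ is exactly the constants. This step is where the Type 4 pairing is essential — precisely that $\bmnabla V(T)\subseteq\bm W(T)$ and $(\bm\tau\cdot\bm n)|_F\in P_k(F)$.

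To conclude, I would map $T$ to a reference simplex $\hat T$ by the affine transformation, combined with the appropriate $h_T$-rescaling of the flux, so that \eqref{eq:local_1 hdg} becomes the analogous local problem on $\hat T$ with a coefficient tensor and penalty bounded above and below independently of $h_T$ and of $T$ (using shape regularity and the bounds on $\bm A$). Under this transformation one checks $E_T(\lam)\sim h_T^{d-2}\,\widehat{E}(\hat\lam)$ and $\vertiii{\lam}^2_{h,\partial T}\sim h_T^{d-2}\norm{\hat\lam-m_{\hat T}\hat\lam}^2_{\partial\hat T}$, so \eqref{eq:367} reduces to the reference-element estimate $\widehat{E}(\hat\lam)\sim\norm{\hat\lam-m_{\hat T}\hat\lam}^2_{\partial\hat T}$; its upper direction is just continuity of a quadratic form on the finite-dimensional space $M(\partial\hat T)$, and its lower direction follows from the null-space characterization above together with a compactness argument over the bounded family of admissible shapes and coefficients. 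I expect this last, scaling step — in particular verifying that the change of variables leaves the coefficients of the local problem uniformly bounded and nondegenerate — to be the main obstacle; the energy identity behind the upper bound and the null-space computation are routine.
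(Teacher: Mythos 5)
Your upper bound (the energy identity with $\bm\tau=\bmsigma_{\lam}$ and $v=u_{\lam}$, Cauchy--Schwarz, the discrete trace inequality and absorption) is essentially the paper's second step, and both your reduction to $m_T(\lam)=0$ and your kernel computation are correct. The gap is in the lower bound $\vertiii{\lam}^2_{h,\partial T}\lesssim E_T(\lam)$, which you reduce to an equivalence of two quadratic forms with common kernel on a reference element and then propose to close by ``a compactness argument over the bounded family of admissible shapes and coefficients.'' After mapping to a fixed reference simplex the shape dependence is indeed harmless, but the coefficient dependence is not: $\bm A$ is only assumed measurable, bounded and uniformly SPD, so the induced family of tensors on $\hat T$ is not compact in any topology in which the local solution operator $\hat\lam\mapsto(\hat u_{\hat\lam},\hat{\bmsigma}_{\hat\lam})$ --- and hence $\widehat E$ --- varies continuously (weak-$*$ $L^\infty$ limits of coefficients do not give convergence of the solutions of \eqref{eq:local_1 hdg}). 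The same-kernel observation by itself yields an equivalence constant for each fixed configuration but no uniformity over $T$ and $h$, and that uniformity is the entire content of the lemma. So as written the lower bound is not established.

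The paper closes this half with a short direct estimate that avoids compactness altogether: test \eqref{eq:local_1 hdg_a} with $\bm\tau=\bmnabla u_{\lam}$ --- legitimate for {\bf Type 4} since $\bmnabla V(T)=\bmnabla P_{k+1}(T)\subseteq[P_k(T)]^d=\bm W(T)$, the same structural fact you used for the kernel --- to obtain $\verti{u_{\lam}}_{1,T}\lesssim\norm{\bmsigma_{\lam}}_T+h_T^{-1/2}\norm{P_T^{\partial}u_{\lam}-\lam}_{\partial T}$; then, with $\tilde\lam:=\lam-\bar\lam$, write $\norm{\tilde\lam}^2_{\partial T}=\bint{\tilde\lam,\lam-P_T^{\partial}u_{\lam}}{\partial T}+\bint{\tilde\lam,u_{\lam}}{\partial T}$ and bound the two pairings by Cauchy--Schwarz, the trace inequality and the Poincar\'e inequality applied to $u_{\lam}-\bar u_{\lam}$. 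This gives $h_T^{-1/2}\norm{\lam-\bar\lam}_{\partial T}\lesssim\norm{\bmsigma_{\lam}}_T+\norm{\alpha_T^{1/2}(P_T^{\partial}u_{\lam}-\lam)}_{\partial T}$ with constants depending only on shape regularity and the ellipticity bounds of $\bm A$, which is exactly the uniform lower bound your argument is missing.
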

\begin{proof}
  We first show
  \begin{equation}\label{eq_cond_low}
    h_T^{-\frac{1}{2}}\norm{\lam-\bar\lam}_{\partial T}
    \lesssim \norm{\bmsiglam}_T+\norm{\alpha_T^{\frac{1}{2}}(P_T^{\partial}u_{\lam}-\lam)}_{\partial T},
  \end{equation}
  where $\bar\lam=\frac{1}{|\partial T|}\int_{\partial T}\lam$. In fact,
  from \eqref{eq:local_1 hdg_a}  it follows
  \begin{equation}\label{eq:400}
    (\bmnabla u_{\lam},\bm\tau)_T = (\bm C\bmsiglam,\bm\tau)_T+\bint{P_T^{\partial}u_{\lam}-\lam,\bm\tau\cdot\bm n}{\partial T}
    ~~\text{ for all $\bm\tau\in\bm W(T)$}.
  \end{equation}
  Taking $\bm\tau=\bmnabla u_{\lam}$ in \eqref{eq:400}, we  immediately  get
  \begin{equation}\label{eq:401}
    \verti{u_{\lam}}_{1,T}\lesssim\norm{\bmsiglam}+h_T^{-\frac{1}{2}}\norm{P_T^{\partial}u_{\lam}-\lam}_{\partial T}.
  \end{equation}
  Define $\tilde{\lam}:=\lam-\bar\lam$ and $\bar{u}_{\lam}:=\frac{1}{|T|}\int_Tu_{\lam}$, then we have
  \begin{displaymath}
    \begin{split}
      \bint{\tilde{\lam},u_{\lam}}{\partial T} &= \bint{\tilde{\lam},u_{\lam}-\bar{u}_{\lam}}{\partial T} \\
      &\leqslant \norm{\tilde{\lam}}_{\partial T} \norm{u_{\lam}-\bar{u}_{\lam}}_{\partial T}\\
      &\lesssim h_T^{\frac{1}{2}}\norm{\tilde{\lam}}_{\partial T}\verti{u_{\lam}}_{1,T}\\
      &\lesssim h_T^{\frac{1}{2}}\norm{\tilde{\lam}}_{\partial T}\left(\norm{\bmsiglam}_T +
        \norm{\alpha_T^{\frac{1}{2}}(P_T^{\partial}u_{\lam}-\lam)}_{\partial T}\right).~~~~~~~~~~~\text{by \eqref{eq:401}}
    \end{split}
  \end{displaymath}
  This estimate, together with
  \begin{displaymath}
    \begin{split}
      \bint{\tilde{\lam},\lam-P_T^{\partial}u_{\lam}}{\partial T} &\leqslant \norm{\tilde{\lam}}_{\partial T} \norm{\lam-P_T^{\partial}u_{\lam}}_{\partial T}\\
      &\lesssim h_T^{\frac{1}{2}}\norm{\tilde{\lam}}_{\partial T}\norm{\alpha_T^{\frac{1}{2}}(\lam-P_T^{\partial}u_{\lam})}_{\partial T},
    \end{split}
  \end{displaymath}
  yields
  \begin{displaymath}
    \begin{split}
      \norm{\tilde{\lam}}^2_{\partial T} &= \bint{\tilde{\lam},\tilde{\lam}-P_T^{\partial}u_{\lam}}{\partial T} + \bint{\tilde{\lam},P_T^{\partial}u_{\lam}}{\partial T}\\
      &= \bint{\tilde{\lam},\lam-P_T^{\partial}u_{\lam}}{\partial T} + \bint{\tilde{\lam}, u_{\lam}}{\partial T}\\
      &\lesssim h_T^{\frac{1}{2}}\norm{\tilde{\lam}}_{\partial T}\left(\norm{\bmsiglam}_T +
        \norm{\alpha_T^{\frac{1}{2}}(P_T^{\partial}u_{\lam}-\lam)}_{\partial T}\right),
    \end{split}
  \end{displaymath}
  which implies \eqref{eq_cond_low} immediately.

  Second, we show
  \begin{equation}\label{eq_cond_upper}
    \norm{\bmsiglam}_T + \norm{\alpha_T^{\frac{1}{2}}(P_T^{\partial}u_{\lam}-\lam)}_{\partial T} \lesssim
    h_T^{-\frac{1}{2}}\norm{\lam-\bar\lam}_{\partial T}.
  \end{equation}
  In fact,  taking $\bm\tau = \bmsiglam$ in \eqref{eq:local_1 hdg_a}, $v = u_{\lam}-\bar\lam$ in \eqref{eq:local_1 hdg_b}, and adding the two  resultant
  equations, we obtain
  \begin{displaymath}
    \begin{split}
      &\norm{\bm C^{\frac{1}{2}}\bmsiglam}_T^2 + \norm{\alpha_T^{\frac{1}{2}}(P_T^{\partial}u_{\lam}-\bar\lam)}^2_{\partial T} \\
      =~& \bint{\lam-\bar\lam,\bmsiglam\cdot\bm n}{\partial T} + \bint{\alpha_T(\lam-\bar\lam),P_T^{\partial} u_{\lam}-\bar\lam}{\partial T}\\
      \leqslant~& \norm{\lam-\bar\lam}_{\partial T}\norm{\bmsiglam}_{\partial T} + \norm{\alpha_T^{\frac{1}{2}}(\lam-\bar\lam)}_{\partial T}
      \norm{\alpha_T^{\frac{1}{2}}(P_T^{\partial} u_{\lam}-\bar\lam)}_{\partial T}\\
      \lesssim~& h_T^{-\frac{1}{2}}\norm{\lam-\bar\lam}_{\partial T}\norm{\bmsiglam}_T+\norm{\alpha_T^{\frac{1}{2}}(\lam-\bar\lam)}_{\partial T}
      \norm{\alpha_T^{\frac{1}{2}}(P_T^{\partial} u_{\lam}-\bar\lam)}_{\partial T}\\
      \lesssim~& h_T^{-\frac{1}{2}}\norm{\lam-\bar\lam}_{\partial T}\left(\norm{\bmsiglam}_T+\norm{\alpha_T^{\frac{1}{2}}(P_T^{\partial} u_{\lam}-\bar\lam)}_{\partial T}\right),
    \end{split}
  \end{displaymath}
  which implies
  \begin{equation}\label{8829383}
    \norm{\bmsiglam}_T + \norm{\alpha_T^{\frac{1}{2}}(P_T^{\partial}u_{\lam}-\bar\lam)}_{\partial T}
    \lesssim h_T^{-\frac{1}{2}}\norm{\lam-\bar\lam}_{\partial T}.
  \end{equation}
  By noticing that the above estimate also indicates
  \begin{displaymath}
    \begin{split}
      \norm{\alpha_T^{\frac{1}{2}}(P_T^{\partial}u_{\lam}-\lam)}_{\partial T}&\leqslant\norm{\alpha_T^{\frac{1}{2}}(P_T^{\partial}
        u_{\lam}-\bar\lam)}_{\partial T} + \norm{\alpha_T^{\frac{1}{2}}(\lam-\bar\lam)}_{\partial T}\\
      &\lesssim h_T^{-\frac{1}{2}}\norm{\lam-\bar\lam}_{\partial T},
    \end{split}
  \end{displaymath}
  the estimate \eqref{eq_cond_upper} follows immediately. Then,  from \eqref{eq_cond_low} and \eqref{eq_cond_upper},  it follows
  \begin{equation}\label{eq:1101}
    (\bm C^{-1}\bmsigma_{\lam},\bmsigma_{\lam})_T+\bint{\alpha_T(P_T^{\partial}u_{\lam}-\lam),P_T^{\partial}u_{\lam}-\lam}{\partial T}
    \sim h_T^{-1}\norm{\lam-\bar\lam}^2_{\partial T}.
  \end{equation}
  A standard scaling argument shows
  \begin{equation}
    \vertiii{\lam}_{h,\partial T}\sim h_T^{-\frac{1}{2}}\norm{\lam-\bar\lam}_{\partial T},
  \end{equation}
  which, together with   \eqref{eq:1101}, indicates the desired estimate \eqref{eq:367}.
\end{proof}

Based on Lemma \ref{lem51}, it is trivial to derive the proposition below.
\begin{propo}
  For {\bf Type 4} HDG method, {\bf Assumption \ref{ass:d_h}} holds.
\end{propo}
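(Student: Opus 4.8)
The plan is to obtain the global equivalence \eqref{eq:assum d_h} by summing the elementwise equivalence already established in Lemma \ref{lem51} over the triangulation. First I would use the definition \eqref{def_d_h_hdg} of $d_h$ (with $\lam_h=\mu_h$), together with $(\bm C\bmsigma_{\mu_h},\bmsigma_{\mu_h})=\sum_{T\in\mathcal T_h}(\bm C\bmsigma_{\mu_h},\bmsigma_{\mu_h})_T$ and the definition of $\bndint{\cdot}$, to write, for any $\mu_h\in\mathbb M_{h,k}$,
\begin{displaymath}
  d_h(\mu_h,\mu_h)=\sum_{T\in\mathcal T_h}\Big[(\bm C\bmsigma_{\mu_h},\bmsigma_{\mu_h})_T
  +\bint{\alpha_T(P_T^{\partial}u_{\mu_h}-\mu_h),P_T^{\partial}u_{\mu_h}-\mu_h}{\partial T}\Big],
\end{displaymath}
where, for each $T\in\mathcal T_h$, the local pair $(u_{\mu_h},\bmsigma_{\mu_h})|_T\in V(T)\times\bm W(T)$ solves Local problem 1 \eqref{eq:local_1 hdg} with data $\lam=\mu_h|_{\partial T}$. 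Since \eqref{eq:local_1 hdg} involves $\lam$ only through its values on $\partial T$, this pair depends only on the trace $\mu_h|_{\partial T}$, which belongs to $M(\partial T)$ because $\mu_h|_F\in P_k(F)$ for every face.

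Next I would invoke Lemma \ref{lem51} on each element $T$ with $\lam=\mu_h|_{\partial T}$. Because $\bm C=\bm A^{-1}$ is symmetric and uniformly positive definite on $\Omega$, we have $(\bm C\bmsigma_{\mu_h},\bmsigma_{\mu_h})_T\sim\norm{\bmsigma_{\mu_h}}_T^2$ with constants independent of $T$ and $h$ (this also reconciles the $\bm C$ versus $\bm C^{-1}$ appearing in \eqref{def_d_h_hdg} and \eqref{eq:367}), so the $T$-summand above coincides, up to such constants, with the left-hand side of \eqref{eq:367}; hence it is $\sim\vertiii{\mu_h}^2_{h,\partial T}$. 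Since $\mathcal T_h$ is shape regular, the hidden constants in Lemma \ref{lem51} may be taken uniform over $\mathcal T_h$. Summing over all $T\in\mathcal T_h$ and using $\vertiii{\mu_h}^2_h=\sum_{T\in\mathcal T_h}\vertiii{\mu_h}^2_{h,\partial T}$ then yields $d_h(\mu_h,\mu_h)\sim\vertiii{\mu_h}^2_h$, i.e.\ \eqref{eq:assum d_h}.

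Once Lemma \ref{lem51} is in hand the argument is routine, so there is no real obstacle at the level of this proposition; the substance lies entirely in Lemma \ref{lem51}, and the one point worth stressing is that its proof is tailored to the \textbf{Type 4} scaling $\alpha_T=O(h_T^{-1})$, being used through trace-type estimates such as $\norm{\lam-P_T^{\partial}u_{\lam}}_{\partial T}\lesssim h_T^{1/2}\norm{\alpha_T^{1/2}(\lam-P_T^{\partial}u_{\lam})}_{\partial T}$. For this reason the present derivation does not transfer verbatim to \textbf{Types 1--3}, which instead rely on the references cited above. Therefore the proposition follows at once by summing Lemma \ref{lem51} over the triangulation.
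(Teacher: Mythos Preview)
Your proposal is correct and matches the paper's approach exactly: the paper states that ``Based on Lemma \ref{lem51}, it is trivial to derive the proposition below'' and gives no further argument, and your write-up simply spells out this trivial summation over $T\in\mathcal T_h$. Your observation about the $\bm C$ versus $\bm C^{-1}$ discrepancy between \eqref{def_d_h_hdg} and \eqref{eq:367} is apt (it appears to be a typo in the paper, harmless since both are uniformly equivalent to $\norm{\bmsigma_\lambda}_T^2$), and your remark that the argument is specific to the {\bf Type 4} scaling $\alpha_T=O(h_T^{-1})$ is also well taken.
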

\begin{rem}
  It has been shown in  \cite{COCKBURN_2004,COCKBURN_2005} that, when $\bm A$ is
  a piecewise constant matrix and $k\geq 1$, the bilinear form $ d_h(\cdot,\cdot)$
  arising from the hybridized RT mixed element method, i.e. {\bf Type 1} HDG method,
  coincides with  that arising from the hybridized BDM mixed element method, i.e. {\bf Type 2}
  HDG method. Then any preconditioner for {\bf Type 1} HDG method is also a preconditioner
  for {\bf Type 2} HDG method, and vice versa.
\end{rem}
\begin{rem}
  In \cite{multigrid_hdg}, a first analysis of multigrid method for {\bf Type 3} HDG method was
  presented. However, it was required there that that the model problem \eqref{eq:model}
  admits the regulartiy estimate $\norm{u}_{1+\alpha,\Omega}\leqslant C_{\alpha,\Omega}\norm{f}_{\alpha-1,\Omega}$
  with $\alpha\in(0.5,1]$ and $C_{\alpha,\Omega}$ being a positive constant that only depends on $\alpha$
  and $\Omega$. We note that our analysis in Section 4 for the BPX preconditioner does not require
  any regularity assumption. In \cite{Li;Xie;2015;HDG}, a more general framework for designing multilevel methods for HDG methods
  were presented and analyzed without any regularity assumption.
\end{rem}
\subsection{ Weak Galerkin method}\label{ssec:WG}
At first, we follow \cite{WangYe2013} to introduce the discrete weak gradients.
Let $T\in\mathcal T_h$. We define $\bmnabla_w^i: L^2(T)\to\bm W(T)$ by
\begin{equation}
  (\bmnabla_w^i v, \bm q)_T := -(v, \text{div}~\bm q)_T~~~\text{ for all $v\in L^2(T)$ and $\bm q\in\bm W(T)$ },
\end{equation}
and  define $\bmnabla_w^b: L^2(\partial T)\to\bm W(T)$ by
\begin{equation}
  (\bmnabla_w^b \mu, \bm q)_T := \langle \mu, \bm q \cdot \bm n \rangle_{\partial T}
  ~~~~\text{ for all $\mu\in L^2(\partial T)$ and $\bm q \in\bm W(T)$.}
\end{equation}
Then we define the discrete weak gradients $\bmnabla_w : L^2(T)\times L^2(\partial T) \to \bm W(T)$ with
\begin{equation}
  \bmnabla_w(v, \mu) := \bmnabla_w^i v + \bmnabla_w^b \mu~~~\text{ for all $(v, \mu) \in L^2(T)\times L^2(\partial T)$}.
\end{equation}

Hence, the WG discretization reads as follows: Seek $(u_h, \lambda_h) \in V_h \times \mathbb M_{h,k}$ such that
\begin{equation}\label{eq:disc_wg}
  (\bm A\bmnabla_w(u_h,\lam_h),\bmnabla_w(v_h,\mu_h))+\bndint{\alpha_T(P_T^{\partial}u_h-\lam_h),P_T^{\partial}v_h-\mu_h} = (f,v_h)
\end{equation}
holds for all $(v_h, \mu_h) \in V_h \times \mathbb M_{h,k}$,
where $\alpha_T$ denotes a nonnegative penalty function defined on $\partial T$.

We shall follow the same routine as in the previous subsection to show a new characterization of the WG method.
We introduce two local problems as follows.
\par
{\bf Local problem 1'}: For any given $f\in L^2(T)$, seek $u_f\in V(T)$ such that
\begin{equation}\label{eq:local_1 hdg_wg}
  (\bm A\bmnabla_w^iu_f,\bmnabla_w^iv)_T + \langle\alpha_TP_T^{\partial}u_f,P_T^{\partial}v\rangle_{\partial T}
  = (f, v)_T
\end{equation}
holds for all $v\in V(T)$.
\par
{\bf Local problem 2'}: For any given $\lam\in L^2(\partial T)$, seek $u_{\lam}\in V(T)$ such that
\begin{equation}\label{eq:local2 hdg_wg}
  (\bm A\bmnabla_w^iu_{\lam},\bmnabla_w^iv)_T + \langle\alpha_TP_T^{\partial}u_{\lam},P_T^{\partial}
  v\rangle_{\partial T}=-(\bm A\bmnabla_w^b\lam,\bmnabla_w^iv)_T + \langle\alpha_T\lam,P_T^{\partial}v
  \rangle_{\partial T}
\end{equation}
holds for all $v\in V(T)$.

Similar to Theorem \ref{global-local},  the following conclusion holds.
\begin{thm}
  Suppose $(u_h,\lam_h)\in V_h\times \mathbb M_{h,k}$ to be the solution to the
  system \eqref{eq:disc_wg}, and suppose, for any $T\in\mathcal T_h$, $u_f$
  and $u_{\lam_h}$ to be the solutions to the local problems \eqref{eq:local_1 hdg_wg}
  and \eqref{eq:local2 hdg_wg} (by replacing $\lam$ with $\lam_h$), respectively. Then it holds
  \begin{equation}
    u_h = u_{\lam_h} + u_f,
  \end{equation}
  and $\lam_h\in \mathbb M_{h,k}$ is the solution to the system \eqref{eq:sys}, i.e.
  \begin{equation*}
    d_h(\lam_h,\mu_h) = b_h(\mu_h)~~~\text{ for all $\mu_h \in \mathbb M_{h,k}$},
  \end{equation*}
  where
  \begin{eqnarray}
    d_h(\lam_h,\mu_h)\!\!&\!\!:=\!&\!\!\!(\bm A\bmnabla_w(u_{\lam_h,\lam_h}),\bmnabla_w(u_{\mu_h},\mu_h)) + \bndint{\alpha_T(P_T^{\partial}u_{\lam_h}-\lam_h),
      P_T^{\partial}u_{\mu_h}-\mu_h},\nonumber\\
    \\
    b_h(\mu_h)\!\!&\!\!:=\!\!&\!\!\!(f,u_{\mu_h}).
  \end{eqnarray}
\end{thm}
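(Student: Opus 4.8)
The plan is to follow exactly the same route used for the HDG characterization in Theorem \ref{global-local}. First I would establish that the WG bilinear form in \eqref{eq:disc_wg} is additive over elements, so that the global problem decouples once the interior unknown $u_h$ is expressed locally in terms of $\lambda_h$ and $f$. Concretely, for each $T$ the map $v\mapsto (\bm A\bmnabla_w^i v,\bmnabla_w^i v)_T + \langle\alpha_T P_T^\partial v, P_T^\partial v\rangle_{\partial T}$ is a symmetric positive semidefinite form on $V(T)$; I would first argue it is actually definite (i.e. an inner product) on $V(T)$ for the relevant choices of $V(T),\bm W(T),\alpha_T$, so that {\bf Local problems 1'} and {\bf 2'} are uniquely solvable. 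This is the analogue of the well-posedness of {\bf Local problems 1} and {\bf 2} in the HDG case.

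Next, given the solution $(u_h,\lambda_h)$ of \eqref{eq:disc_wg}, I would set $w_h := u_{\lambda_h} + u_f$ with $u_{\lambda_h},u_f$ the local solutions, and show $w_h = u_h$. The argument is: take an arbitrary $v_h\in V_h$ and $\mu_h=0$ in \eqref{eq:disc_wg}; using $\bmnabla_w(u_h,\lambda_h) = \bmnabla_w^i u_h + \bmnabla_w^b\lambda_h$ and splitting $u_h$, the equation restricted to each $T$ reads precisely the sum of the local equations \eqref{eq:local_1 hdg_wg} (tested against $v_h|_T$) and \eqref{eq:local2 hdg_wg} (with $\lambda=\lambda_h$, tested against $v_h|_T$), so $u_h$ satisfies on each $T$ the same local variational problem as $u_{\lambda_h}+u_f$; by the uniqueness just established, $u_h|_T = (u_{\lambda_h}+u_f)|_T$ for every $T$, hence $u_h = u_{\lambda_h}+u_f$. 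Substituting this decomposition back into \eqref{eq:disc_wg} and now taking $v_h=0$ with arbitrary $\mu_h\in\mathbb M_{h,k}$ yields a variational equation in $\lambda_h$ alone; collecting terms and using bilinearity gives $d_h(\lambda_h,\mu_h)=b_h(\mu_h)$ with $d_h,b_h$ as stated, where the cross terms involving $u_f$ against $\mu_h$ are absorbed into $b_h(\mu_h)=(f,u_{\mu_h})$ by testing \eqref{eq:local_1 hdg_wg} with $v=u_{\mu_h}$ and \eqref{eq:local2 hdg_wg} with $v=u_f$ and exploiting the symmetry of the local form.

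The one genuinely delicate point, and the step I expect to be the main obstacle, is the bookkeeping that turns the raw substituted expression into the clean symmetric form
\begin{displaymath}
  d_h(\lambda_h,\mu_h) = (\bm A\bmnabla_w(u_{\lambda_h},\lambda_h),\bmnabla_w(u_{\mu_h},\mu_h)) + \bndint{\alpha_T(P_T^\partial u_{\lambda_h}-\lambda_h),P_T^\partial u_{\mu_h}-\mu_h}
\end{displaymath}
and the right-hand side into $(f,u_{\mu_h})$: one must carefully cancel the terms coupling the $f$-part and the $\lambda$-part, which requires testing each local problem with the ``other'' solution and invoking symmetry of the local bilinear form, exactly as in the HDG derivation. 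Everything else — additivity over $T$, the definition of $\bmnabla_w$, and the reduction itself — is a direct transcription of the HDG argument with $(\bm C\bmsigma,\bm\tau)$ replaced by $(\bm A\bmnabla_w\cdot,\bmnabla_w\cdot)$, so I would present it concisely and refer back to the proof of Theorem \ref{global-local} for the structure, spelling out only the local well-posedness and the cancellation of cross terms in detail.
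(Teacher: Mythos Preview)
Your proposal is correct and matches the paper's approach: the paper does not give a detailed proof of this theorem but simply prefaces it with ``Similar to Theorem \ref{global-local}, the following conclusion holds,'' deferring entirely to the HDG derivation in \cite{unified_hdg}. Your plan to transcribe that argument with $(\bm C\bmsigma,\bm\tau)$ replaced by $(\bm A\bmnabla_w\cdot,\bmnabla_w\cdot)$, establishing local well-posedness and then using symmetry of the local form to cancel the cross terms, is exactly the intended route.
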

We consider two types of   WG methods (\cite{WangYe2013}):
\begin{itemize}
  \item {\bf Type 1.} $V(T) = P_k(T)$, $\bm W(T) = [P_k(T)]^d+P_K(T)\bm x$ and $\alpha_T = 0$;
  \item {\bf Type 2.}  $V(T) = P_{k-1}(T)~(k\geqslant 1)$, $\bm W(T) = [P_k(T)]^d$ and $\alpha_T = 0$.
\end{itemize}
In both cases, we can prove that {\bf Assumption \ref{ass:d_h}} holds.
\begin{thm}
  For {\bf Type 1} WG method, {\bf Assumption \ref{ass:d_h}} holds.
\end{thm}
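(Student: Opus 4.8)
The plan is to exploit the elementwise locality of $d_h$, to recast the local problem as an energy minimization, to reduce to a piecewise-constant diffusion tensor, and finally to invoke the known result for the hybridized RT method (\textbf{Type 1} HDG). First, since $\alpha_T=0$ for \textbf{Type 1}, the form is local, $d_h(\mu_h,\mu_h)=\sum_{T\in\mathcal T_h}(\bm A\bmnabla_w(u_{\mu_h},\mu_h),\bmnabla_w(u_{\mu_h},\mu_h))_T$, with $u_{\mu_h}|_T\in V(T)$ solving \eqref{eq:local2 hdg_wg}. I would observe that \eqref{eq:local2 hdg_wg} (with $\alpha_T=0$) is precisely the Euler--Lagrange equation of $\min_{v\in V(T)}(\bm A\bmnabla_w(v,\mu_h),\bmnabla_w(v,\mu_h))_T$ — the functional is strictly convex because $v\mapsto\bmnabla_w^iv$ is injective on $V(T)$ (as $\text{div}\,\bm W(T)=P_k(T)$) — and that at the minimizer the term $(\bm A\bmnabla_w(u_{\mu_h},\mu_h),\bmnabla_w^iu_{\mu_h})_T$ vanishes, so that
\begin{equation*}
  d_h(\mu_h,\mu_h)=\sum_{T\in\mathcal T_h}\ \min_{v\in V(T)}\,\norm{\bm A^{1/2}\bmnabla_w(v,\mu_h)}_T^2 .
\end{equation*}

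Second, because the map $v\mapsto\bmnabla_w(v,\mu_h)$ does not involve $\bm A$ while $\bm A$ is symmetric and uniformly positive definite, I would compare each local minimum against the competitor that realizes the corresponding minimum for $\bm A\equiv I$; this yields $d_h(\mu_h,\mu_h)\sim\sum_{T\in\mathcal T_h}\min_{v\in V(T)}\norm{\bmnabla_w(v,\mu_h)}_T^2$, with constants depending only on the ellipticity bounds of $\bm A$. Hence it suffices to treat the constant-coefficient case $\bm A\equiv I$.

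Third, for $\bm A\equiv I$ I would identify the local energy with that of \textbf{Type 1} HDG. Set $\bmsigma:=\bmnabla_w(u_{\mu_h},\mu_h)\in\bm W(T)$ — here it matters that $\bm A$ is constant, so that $\bm A\bmnabla_w(\cdot)$ stays in $\bm W(T)$. Using the defining relation of $\bmnabla_w$ together with $\text{div}\,\bm W(T)=P_k(T)=V(T)$, the optimality condition rewrites as $(v,\text{div}\,\bmsigma)_T=0$ for all $v\in V(T)$, whence $\text{div}\,\bmsigma=0$ on $T$; combined once more with the definition of $\bmnabla_w$, this shows that $(\bmsigma,u_{\mu_h})$ solves the HDG \textbf{Local problem 1} \eqref{eq:local_1 hdg} with $\bm C=I$, $\alpha_T=0$ for the datum $\mu_h$. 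Therefore $\min_{v\in V(T)}\norm{\bmnabla_w(v,\mu_h)}_T^2=(\bmsigma_{\mu_h},\bmsigma_{\mu_h})_T$ coincides with the local form \eqref{def_d_h_hdg} of the hybridized RT method, for which \textbf{Assumption \ref{ass:d_h}} is known from \cite{schwarz_pre}; summing over $T$ gives $d_h(\mu_h,\mu_h)\sim\vertiii{\mu_h}_h^2$. A self-contained alternative for this step is to prove $\min_{v\in V(T)}\norm{\bmnabla_w(v,\mu)}_T^2\sim\vertiii{\mu}_{h,\partial T}^2$ directly by scaling to a reference simplex, after checking that this local form vanishes exactly on constants — the ``if'' part being the divergence theorem applied to $\bmnabla_w(c,c)$.

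I expect the main obstacle to be the structural input used in the third step: one must invoke the unisolvence of the local Raviart--Thomas space $\bm W(T)=[P_k(T)]^d+P_k(T)\bm x$, namely that $\text{div}:\bm W(T)\to V(T)$ is onto and that the normal traces of $\bm W(T)$ exhaust $M(\partial T)$ — either to re-identify the optimality system as a genuine mixed local problem, or, in the direct route, to show that the kernel of $\bmnabla_w(\cdot,\mu)$ modulo constants is trivial. The remaining ingredients (locality, strict convexity, reduction to constant $\bm A$, and the reference-element scaling) are routine.
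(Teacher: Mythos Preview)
Your argument is correct and takes a genuinely different route from the paper. The paper works directly on each element with the original variable tensor $\bm A$: for the upper bound it observes that $\text{div}\,P_T^{rt}(\bm A\bmsigma)=0$ (with $\bmsigma:=\bmnabla_w(u_{\lambda_h},\lambda_h)$ and $P_T^{rt}$ the $L^2$-projection onto $\bm W(T)$), which lets it replace $\lambda_h$ by $\lambda_h-m_T(\lambda_h)$ in $\langle P_T^{rt}(\bm A\bmsigma)\cdot\bm n,\lambda_h\rangle_{\partial T}$; for the lower bound it first derives the identity $(\bmsigma-\bmnabla u_{\lambda_h},\bm\tau)_T=\langle\bm\tau\cdot\bm n,\lambda_h-u_{\lambda_h}\rangle_{\partial T}$ and then plugs in two specially constructed RT test functions (one with prescribed normal traces $\lambda_h-u_{\lambda_h}$ and zero interior moments, the other with zero normal traces and interior moments matching $\bmnabla u_{\lambda_h}$) to control $\norm{\lambda_h-u_{\lambda_h}}_{\partial T}$ and $\verti{u_{\lambda_h}}_{1,T}$ by $\norm{\bmsigma}_T$. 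Your approach instead exploits the variational structure: recasting the local problem as an energy minimization lets you strip off the variable $\bm A$ by a pointwise norm-equivalence on the minimand, and then the identity-coefficient case collapses to the hybridized RT local problem, for which the estimate is already available from \cite{schwarz_pre}. What you gain is a cleaner separation of concerns---the dependence on $\bm A$ is isolated from the RT unisolvence properties---and you make explicit the WG/HDG correspondence that the paper only records afterward in Remark~\ref{equivalence-HDG-WG}. What the paper's route gains is self-containment (no appeal to \cite{schwarz_pre}) and a single pass that handles variable $\bm A$ without the minimization detour; its test-function constructions also make the role of the RT degrees of freedom concrete, which is precisely the ``structural input'' you flag as the main obstacle in your scaling alternative.
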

\begin{proof}
  For $T\in\mathcal T_h$, define $\bmsigma:=\bmnabla_w(u_{\lam_h,\lam_h})|_T$. Then
  from \eqref{eq:local2 hdg_wg}  it follows
  $$
  (\bm A\bmsigma,\bmnabla_w^iv)_T = 0 \text{ for all $v\in V(T)$},
  $$
  which implies
  \begin{equation}\label{eq:987}
    \text{div}P_T^{rt}(\bm A\bmsigma) = 0,
  \end{equation}
  where $P_T^{tr}:[L^2(T)]^d\to\bm W(T)$ denotes the standard $L^2$-orthogonal projection operator.
  By the definition of $\bmnabla_w$, we have
  \begin{displaymath}
    \begin{split}
      (\bm A\bmsigma,\bmsigma)_T
      &=(P_T^{rt}(\bm A\bmsigma),\bmnabla_w(u_{\lam_h},\lam_h))_T\\
      &=-(\text{div}(P_T^{rt}(\bm A\bmsigma)),u_{\lam_h})_T+
      \bint{P_T^{rt}(\bm A\bmsigma)\cdot\bm n,\lam_h}{\partial T}\\
      &=\bint{P_T^{rt}(\bm A\bmsigma)\cdot\bm n,\lam_h-m_T(\lam_h)}{\partial T}
      ~~~~~~~~~\text{ (by \eqref{eq:987})}\\
      &\lesssim h_T^{-\frac{1}{2}}\norm{P_T^{rt}(\bm A\bmsigma)}_T\norm{\lam_h-m_T(\lam_h)}_{\partial T}\\
      &\lesssim\norm{\bm A\bmsigma}_T\vertiii{\lam_h}_{h,\partial T},
    \end{split}
  \end{displaymath}
  which shows immediately
  \begin{equation}\label{eq:988}
    (\bm A\bmsigma,\bmsigma)_T\lesssim\vertiii{\lam_h}^2_{h,\partial T}.
  \end{equation}

  On the other hand, for any $\bm\tau\in\bm W(T)$,   from the definition of $\bmnabla_w$ we have
  \begin{displaymath}
    \begin{split}
      (\bmsigma,\bm\tau)_T
      &=(\bmnabla_w(u_{\lam_h},\lam_h),\bm\tau)_T\\
      &=-(u_{\lam_h},\text{div}\bm\tau)_T+\bint{\bm\tau\cdot\bm n,\lam_h}{\partial T}\\
      &=(\bmnabla u_{\lam_h},\bm\tau)_T+\bint{\bm\tau\cdot\bm n,\lam_h-u_{\lam_h}}{\partial T},
    \end{split}
  \end{displaymath}
  which yields
  \begin{equation}\label{eq:989}
    (\bmsigma-\bmnabla u_{\lam_h},\bm\tau)_T = \bint{\bm\tau\cdot\bm n,\lam_h-u_{\lam_h}}{\partial T}.
  \end{equation}
  Taking $\bm\tau\in\bm W(T)$ in  \eqref{eq:989} with
  \begin{equation}
    \left\{
      \begin{array}{rcll}
        \int_F\bm\tau\cdot\bm nq &=& \int_F(\lam_h-u_{\lam_h})q&\text{ for all $F\in\mathcal F_T$ and $q\in P_k(F)$},\\
        \int_T\bm\tau\cdot\bmnabla v &=& 0 & \text{ for all $v\in V(T)$},
      \end{array}
    \right.
  \end{equation}
  we have
  \begin{displaymath}
    \begin{split}
      \norm{\lam_h-u_{\lam_h}}^2_{\partial T}
      &=(\bmsigma-\bmnabla u_{\lam_h},\bm\tau)_T
      =(\bmsigma,\bm\tau)_T
      \leqslant\norm{\bmsigma}_T\norm{\bm\tau}_T\\
      &\lesssim h_T^{\frac{1}{2}}\norm{\bmsigma}_T\norm{\lam_h-u_{\lam_h}}_{\partial T},
    \end{split}
  \end{displaymath}
  where we have used the estimate $\norm{\bm\tau}_T\lesssim h_T^{\frac{1}{2}}\norm{\lam_h-u_{\lam_h}}_{\partial T}$,
  which is a trivial result by applying the famous Piola mapping.
  The above inequality leads to
  \begin{equation}\label{eq:900}
    \norm{\lam_h-u_{\lam_h}}_{\partial T}\lesssim h_T^{\frac{1}{2}}\norm{\bmsigma}_T.
  \end{equation}

  Similarly,  taking $\bm\tau\in\bm W(T)$ in  \eqref{eq:989} with
  \begin{equation}
    \left\{
      \begin{array}{rcll}
        \int_F\bm\tau\cdot\bm nq &=& 0&\text{ for all $F\in\mathcal F_T$ and $q\in P_k(F)$},\\
        \int_T\bm\tau,\bmnabla v)_T &=& (\bmnabla u_{\lam_h},\bmnabla v)_T & \text{ for all $v\in V(T)$},
      \end{array}
    \right.
  \end{equation}
  we have
  \begin{equation}\label{eq:990}
    \vertiii{u_{\lam_h}}_{1,T}\lesssim\norm{\bmsigma}_T.
  \end{equation}
  Since by a standard scaling argument it holds
  \begin{equation}
    \norm{\lam_h-m_T(\lam_h)}_{\partial T}\sim\inf_{c\in\mathbb R}\norm{\lam_h-c}_{\partial T},
  \end{equation}
  we have
  \begin{displaymath}
    \begin{split}
      \norm{\lam_h-m_T(\lam_h)}_{\partial T}
      &\sim\inf_{c\in\mathbb R}\norm{\lam_h-c}_{\partial T}\\
      &\lesssim\norm{\lam_h-u_{\lam_h}}_{\partial T} + \inf_{c\in\mathbb R}\norm{u_{\lam_h}-c}_{\partial T}\\
      &\lesssim\norm{\lam_h-u_{\lam_h}}_{\partial T} + h_T^{\frac{1}{2}}\verti{u_{\lam_h}}_{1,T}\\
      &\lesssim h_T^{\frac{1}{2}}\norm{\bmsigma}_T, ~~~~\text{ (by \eqref{eq:900} and \eqref{eq:990})}
    \end{split}
  \end{displaymath}
  which, together with \eqref{eq:988}, yields
  \begin{equation}
    (\bm A\bmnabla_w(u_{\lam_h},\lam_h),\bmnabla_w(u_{\lam_h},\lam_h))_T\sim\verti{\lam_h}_{h,\partial T}.
  \end{equation}
  As a result, the desired estimate \eqref{eq:assum d_h} follows immediately. This completes the proof.
\end{proof}
\begin{rem}
  Similarly, we can show that {\bf Assumption \ref{ass:d_h}}
  holds for {\bf Type 2} WG method.
\end{rem}
\begin{rem}\label{equivalence-HDG-WG}
  If $\bm A$ is a piecewise constant matrix, the two WG methods are equivalent to the
  hybridized RT mixed element method and the hybridized BDM mixed element method,
  respectively. We refer to (Remark 2.1, \cite{Li;Xie;2014;WG}) for the details.
\end{rem}

\subsection{Nonconforming finite element method}
In this subsection we take Crouzeix-Raviart element method \cite{CR} as an example to show that the theory
in Section 4 also applies to nonconforming methods.

At first, we introduce the Crouzeix-Raviart finite element space $\mathcal L_h^{CR}$ as follows.
\begin{equation}
  \begin{split}
    \mathcal L_h^{CR}:=\{&v_h\in L^2(\Omega): v_h|_T\in P_1(T), \forall T\in\mathcal T_h,
    v_h\text{ is continuous at the }\\
    &\text{gravity point of each interior face of $\mathcal T_h$ and vanishes at the }\\
    &\text{gravity point of each face of $\mathcal T_h$ that lies on $\partial\Omega$}\}.
  \end{split}
\end{equation}
As we know, the standard discretization of the Crouzeix-Raviart element method reads as follows: Seek $u_h\in\mathcal L_h^{CR}$ such
that
\begin{equation}\label{eq_CR}
  (\bm A\bmnabla_h u_h,\bmnabla_h v_h) = (f, v_h)~~~\text{ for all $v_h\in \mathcal L_h^{CR}$},
\end{equation}
where $\bmnabla_h v_h$ is given by
\begin{displaymath}
  \bmnabla_h v_h|_T:=\bmnabla (v_h|_T)~~~\text{ for all $T\in\mathcal T_h$}.
\end{displaymath}

We define an operator $\widetilde\Pi_h:\mathcal L_h^{CR}\to M_{h,0}$ by
\begin{equation}
  \widetilde\Pi_h v_h|_F := \frac{1}{|F|}\int_Fv_h~~~\text{ for all $F\in\mathcal F_h$}.
\end{equation}
Obviously, $\widetilde\Pi_h$ is a bijective map, and its inverse map $\widetilde\Pi_h^{-1}: M_{h,0}\to\mathcal L_h^{CR}$
satisfies
\begin{equation}\label{421}
  \int_F\widetilde\Pi_h^{-1} \mu_h=\int_F\mu_h~~~\text{ for all $F\in\mathcal F_h$ and $\mu_h\in M_{h,0}$}.
\end{equation}
By denoting $\mu_h:=\widetilde\Pi_h v_h$ and $\lambda_h:=\widetilde\Pi_h u_h$, the system \eqref{eq_CR} is
equivalent to the system \eqref{eq:sys}, i.e.
\begin{equation*}
  d_h(\lam_h,\mu_h) = b_h(\mu_h)~~~\text{ for all $\mu_h \in \mathbb M_{h,0}$},
\end{equation*}
where
\begin{eqnarray}\label{CR_equi}
  d_h(\lam_h,\mu_h)\!\!&\!\!:=\!\! &\!\!(\bm A\bmnabla_h\widetilde\Pi_h^{-1}\lam_h,\bmnabla_h\widetilde\Pi_h^{-1}\mu_h),\\
  b_h(\mu_h)\!\! &\!\!:=&\!\! (f, \widetilde\Pi_h^{-1}\mu_h).\nonumber
\end{eqnarray}

By using  standard scaling arguments, it is easy to verify that {\bf Assumption \ref{ass:d_h}} holds in this case.
\begin{rem}\label {equivalence-HDG4-CR}
  We use $d_h^{hdg}$ and $d_h^{cr}$ to denote the bilinear forms defined in \eqref{def_d_h_hdg}
  and \eqref{CR_equi}, respectively. When $\bm A$ is a piecewise constant matrix, we can show
  that for {\bf Type 4} HDG method ($k=0$), $d_h^{hdg} = d_h^{cr}$. In fact, in this case we
  have, for any $T\in \mathcal T_h$,
  $$		M(\partial T):=\left\{\mu \in L^2(\partial T): \mu |_F \in P_0(F)\text{ for each face $F$ of $T$}\right\},
  $$
  $$V(T) = P_{1}(T),\quad \bm W(T) = [P_0(T)]^d,\quad \alpha_T = O(h_T^{-1}).$$
  From \eqref{eq:local_1 hdg_b} it follows
  \begin{displaymath}
    \langle\alpha_T(P_T^{\partial}(u_{\lam}-\lam),P_T^{\partial}v-\mu\rangle_{\partial T} = 0~~~\text{ for all $(v,\mu)\in V(T)\times M(\partial T)$}.
  \end{displaymath}
  Thus, in view of  \eqref{def_d_h_hdg}, it holds
  \begin{equation}\label{new_d_hnew}
    d_h^{hdg}(\lam_h,\mu_h) = (\bm C\bmsigma_{\lam_h},\bmsigma_{\mu_h}).
  \end{equation}
  On the other hand, it follows from $\eqref{eq:local_1 hdg_a}$ and \eqref{421} that
  $$
  (\bm C\bmsigma_{\mu_h},\bm\tau)_T=\bint{\mu_h,\bm\tau\cdot\bm n}{\partial T}=
  \bint{\widetilde\Pi_h^{-1} \mu_h,\bm\tau\cdot\bm n}{\partial T}=(\bmnabla\widetilde\Pi_h^{-1} \mu_h,\bm\tau)_T
  $$
  holds for all $\bm\tau\in \bm W(T)$. Since $\bm C=\bm A^{-1}$ is a constant matrix on $T$, the above equality means
  \begin{equation}\label{eq:139}
    \bmnabla_h\widetilde\Pi_h^{-1} \mu_h=\bm C\bmsigma_{\mu_h}.
  \end{equation}
  Thus, in light of  \eqref{new_d_hnew}-\eqref{eq:139} and  \eqref{CR_equi} we have
  \begin{displaymath}
    \begin{split}
      d_h^{hdg}(\lam_h,\mu_h)=(\bm A\bmnabla\widetilde\Pi_h^{-1}\lam_h,\bmnabla\widetilde\Pi_h^{-1}\mu_h)=d_h^{cr}(\lam_h,\mu_h).
    \end{split}
  \end{displaymath}
\end{rem}
\begin{rem} \label{equivalence-HDG1-CR}
  As shown in \cite{COCKBURN_2004,COCKBURN_2005}, when $\bm A$ is a piecewise constant matrix,
  the stiffness matrix of $d_h(\cdot,\cdot)$ arising from the lowest order hybridized RT mixed
  finite element method, i.e. {\bf Type 1} HDG method ($k=0$) in Subsection \ref{ssec:HDG},
  is the same as that arising from the Crouzeix-Raviart element method.
\end{rem}

\begin{rem}
  From Remarks \ref{equivalence-HDG-WG}-\ref{equivalence-HDG1-CR}, we know that when
  $\bm A$ is a piecewise constant matrix and $k=0$, the four methods, namely {\bf Type 1}
  and {\bf Type 4} HDG methods in Subsection \ref{ssec:HDG}, {\bf Type 1} WG method in
  Subsection \ref{ssec:WG}, and the Crouzeix-Raviart element method, lead to the same
  bilinear form $d_h(\cdot,\cdot)$, and hence   share the same optimal preconditioners.
\end{rem}

\section{Numerical experiments}\label{sec:numer}
In this section, we report several numerical examples in two-dimensions to verify the theoretical
results of Theorem \ref{thm_cond_BD} and Theorem \ref{thm:aux_pre}.  We only consider the problem
\eqref{eq:model} with the diffusion tensor $\bm A=\bm I$, where $\bm I$ is the identity matrix. We
test two types of HDG methods, i.e., {\bf Type 3} HDG method ($k=0,1$) with $\alpha_T = 1$ and
{\bf Type 4} HDG method ($k=0,1$) with $\alpha_T = h_T^{-1}$ for all $T\in\mathcal T_h$. We refer
to \cite{multigrid_hdg, Li;Xie;2015;HDG}  for more numerical results of {\bf Type 3} HDG method.

\noindent {\bf Example 1}. We set $\Omega = (0,1)\times (0,1)$ (a square domain) with the initial triangulation
$\mathcal T_0$ (Figure \ref{fig:first}). We produce a sequence of triangulations
$\{\mathcal T_j:j=1,2,\cdots, 10\}$ by a successive refinement procedure: connecting the midpoints of
three edges of each triangle.

For each $j=5,6,\cdots, 10$,  we set $\mathcal T_h=\mathcal T_j$, and let $\mathcal D_h$
and $\mathcal B_h$ be defined by \eqref{def_D_h} and \eqref{def_B_h} respectively.
Suppose we are to solve the
system $\mathcal D_hx=b_h$, where $b_h$ is a zero vector. Taking $x_0 = (1,1,\dots,1)^t$ as the
initial value, we  use the famous preconditioned conjugate gradient
method (PCG) to solve this system with  the preconditioner $\mathcal B_h$.  The stopping criterion is that the initial error, i.e. $\sqrt{x_0^T\mathcal D_hx_0}$,
is reduced by a factor of $10^{-6}$.

In the case $k=0$, the prolongation operators $\Pi_h^1$ and $\Pi_h^2$ are equivalent,
so we have one BPX preconditioner. In the case $k=1$,  we
have two different BPX preconditioners since $\Pi_h^1$ and $\Pi_h^2$ are not equivalent, and we compute both cases. The
corresponding numerical results, i.e. the number of iterations in PCG, are listed in Table \ref{tab:first}.

\noindent {\bf Example 2}.  The only difference between this example and Example 1  is that we set
$\Omega=(0,1)\times(0,1)/[0,1)\times\{0.5\}$ (a crack domain) with the initial triangulation
$\mathcal T_0$ (Figure \ref{fig:second}). The corresponding numerical
results are presented in Table \ref{tab:second}. 

\noindent {\bf Example 3}. This example  is to verify Theorem \ref{thm:aux_pre} for graded triangulations.
We only consider {\bf Type 3} HDG method. For simplicity we  only highlight the difference between this example and
the previous two examples. 

We set $\Omega = (-1,1)\times(-1,1)$ and define
$\bm A(x,y)=\text{diag}(a(x,y), a(x,y))$ with
\begin{displaymath}
  a(x,y):=\left\{
    \begin{array}{rrr}
      1, & -1 < x < 0, & -1 < y < 0;\\
      7, &  0 < x < 1, & -1 < y < 0;\\
      17,&  0 < x < 1, &  0 < y < 1;\\
      3, & -1 < x < 0, & 0 < y < 1.
    \end{array}
  \right.
\end{displaymath}
We show the first two triangulations $\mathcal T_0$ and $\mathcal T_1$ in Figure \ref{fig:third}
and produce a sequence of graded triangulations $\{\mathcal T_j:j=0,1,\cdots,25\}$ in a successive way:
$\mathcal T_{j+1}$ ($j=2,3,\cdots,24$) is obtained by refining the smallest square containing the
origin in $\mathcal T_j$ (in $\mathcal T_1$, the vertexes of the square to refine is in red color)
as what has been done from $\mathcal T_0$ to $\mathcal T_1$.   $\mathcal T_{25}$ is shown in Figure
\ref{fig:fourth}. For each $j=5,10,15, 20, 25$, we set $\mathcal T_h=\mathcal T_j$ and, in the
definition \eqref{eq:def B^G_h} of $B^G_h$, we set $S_h$ to be the standard symmetric Gauss-Seidel
iteration and set $\we{B_h}=A_h^{-1}$, where $A_h^{-1}:V_h^c\to V_h^c$ is defined by
$$
(A_hu_h,v_h):=(\bm A\bmnabla u_h,\bmnabla v_h)~\text{ for all $u_h,v_h\in V_h^c$}.
$$
The corresponding numerical results are presented in Table \ref{tab:third experiment}.
\begin{figure}[H]
  \begin{floatrow}
    \ffigbox
    {\includegraphics[width=0.35\linewidth]{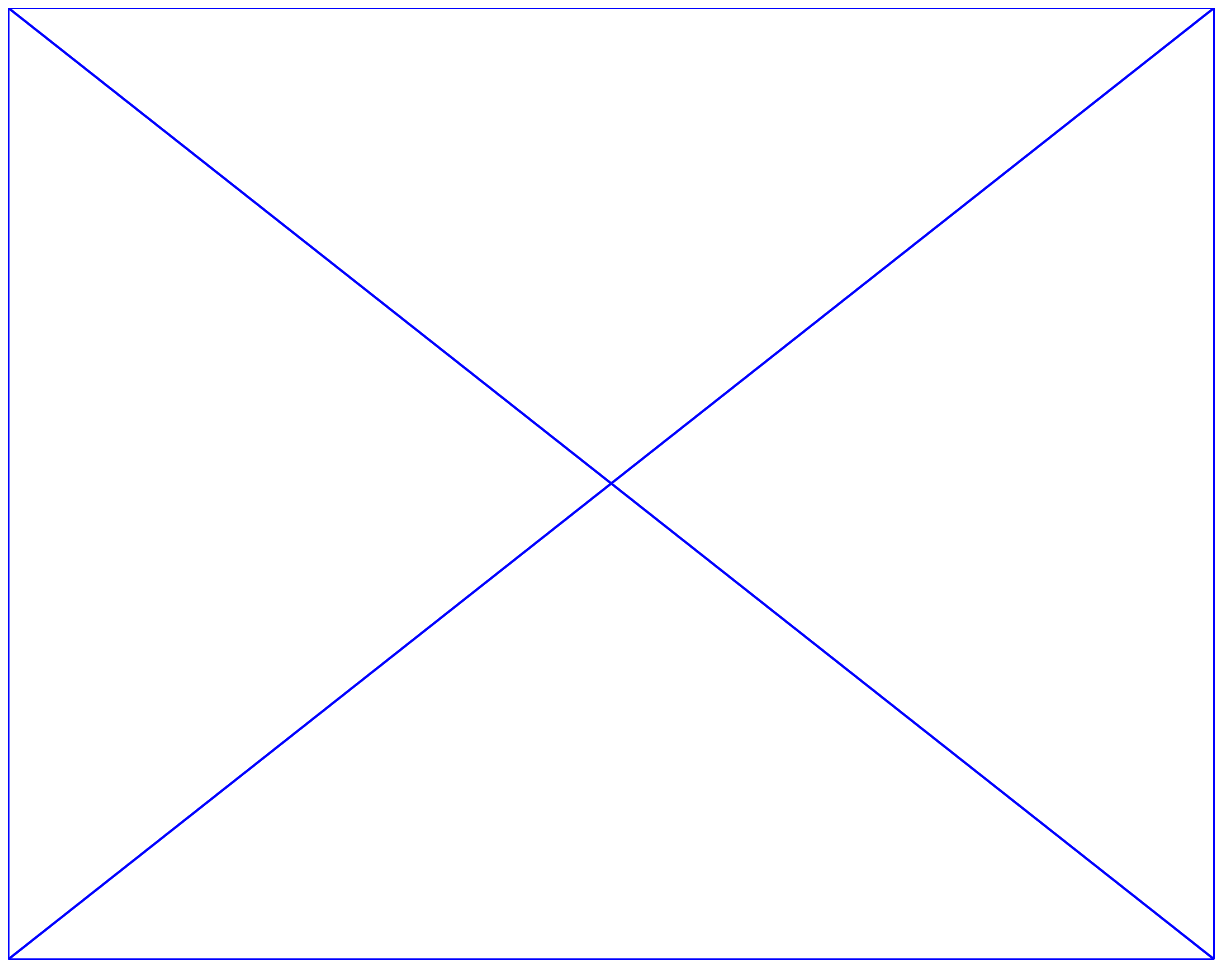}
      \includegraphics[width=0.35\linewidth]{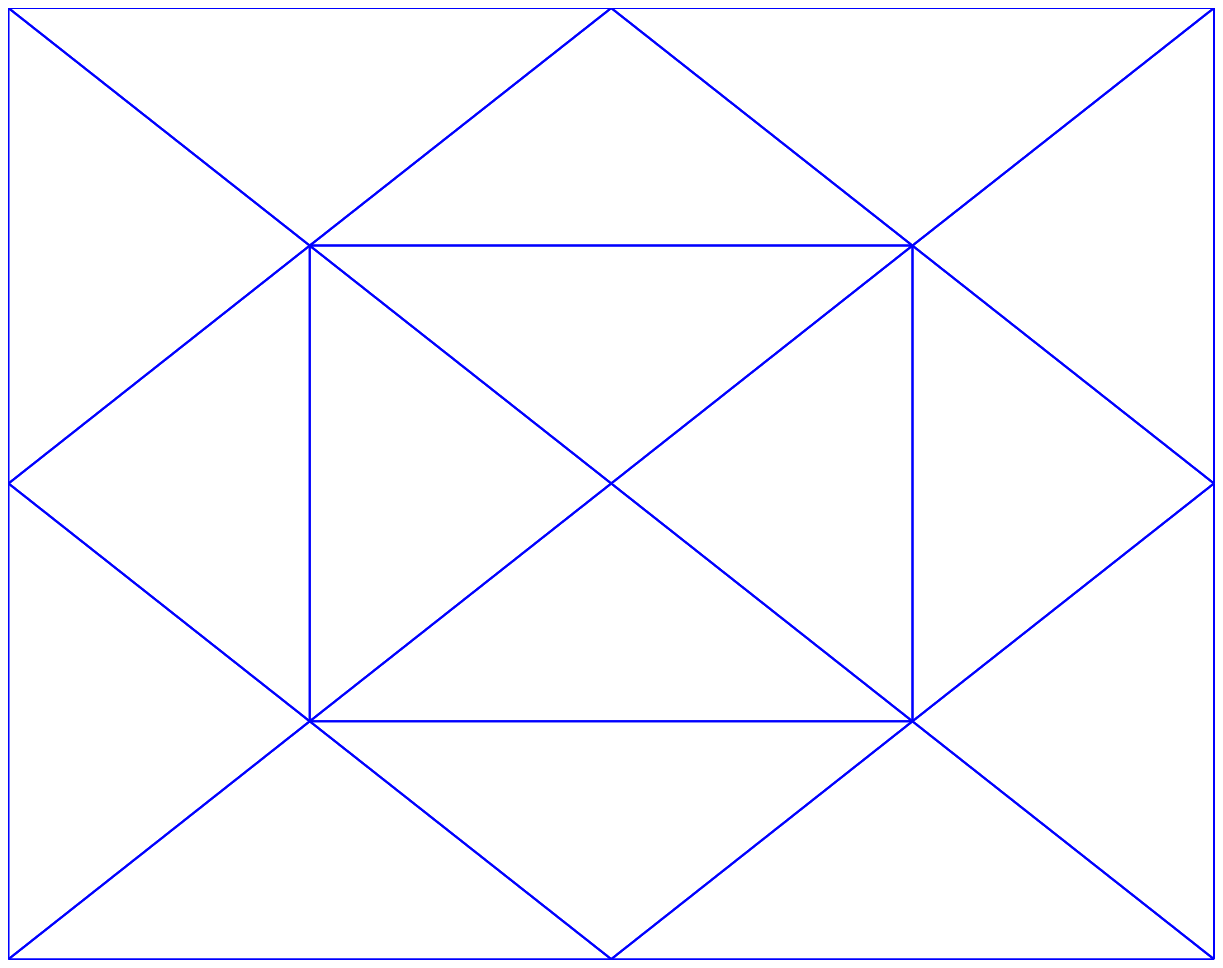}}
    {
      \caption{$\mathcal T_0$ (left) and $\mathcal T_1$ (right) on square domain}\label{fig:first}
    }
    \ffigbox
    {
      \includegraphics[width=0.35\linewidth]{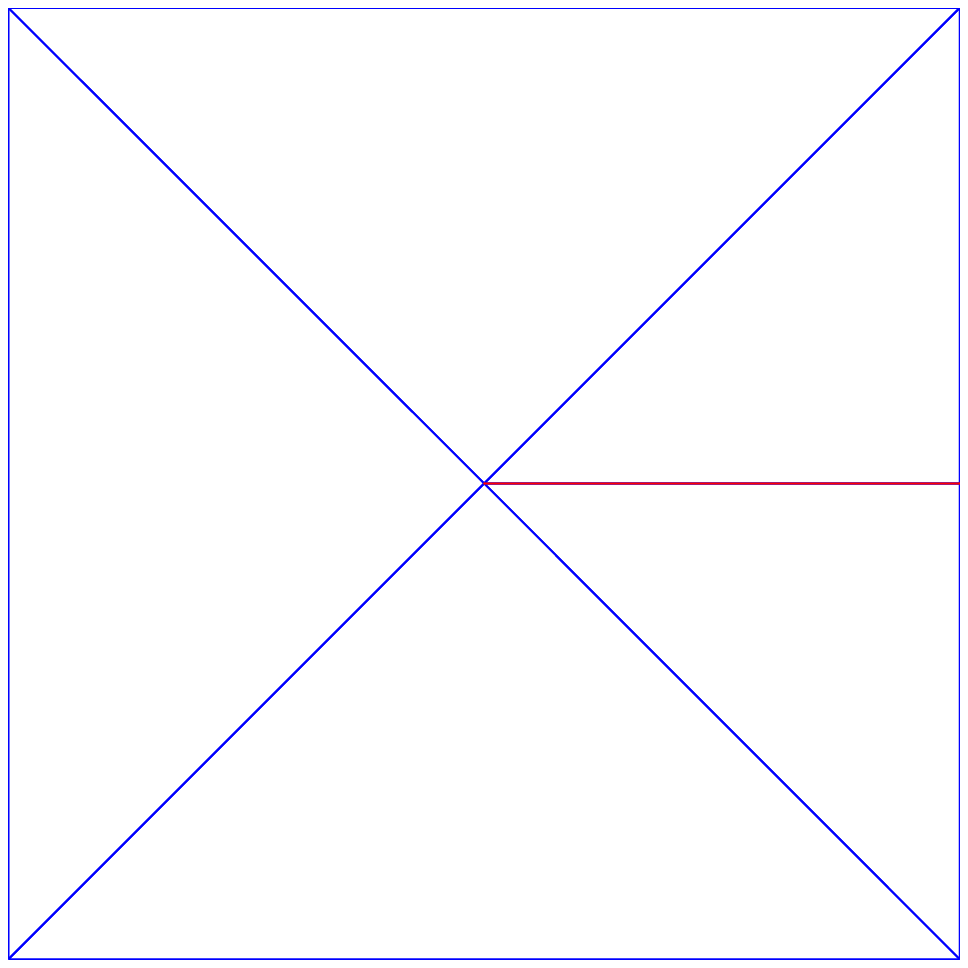}
      \includegraphics[width=0.35\linewidth]{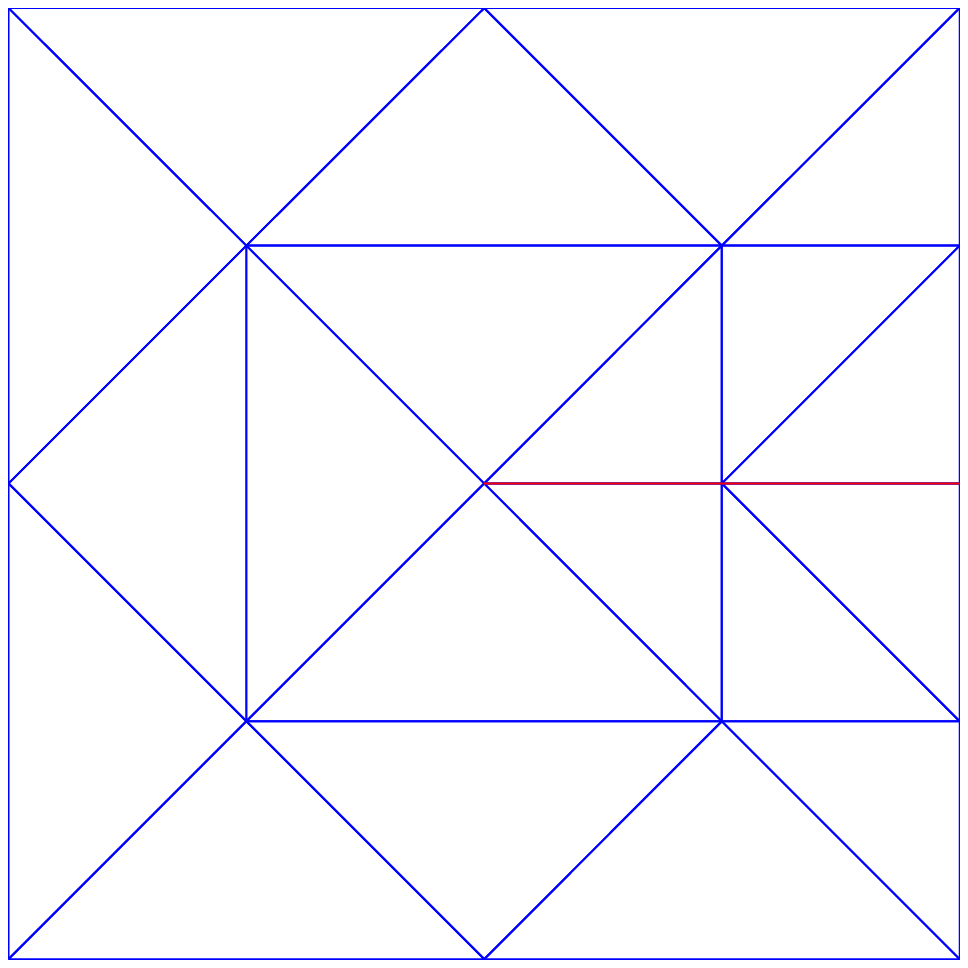}
    }
    {
      \caption{$\mathcal T_0$ (left) and $\mathcal T_1$ (right) on crack domain}\label{fig:second}
    }
  \end{floatrow}
\end{figure}
\begin{figure}[H]
  \begin{floatrow}
    \ffigbox
    {
      \includegraphics[width=0.3\linewidth]{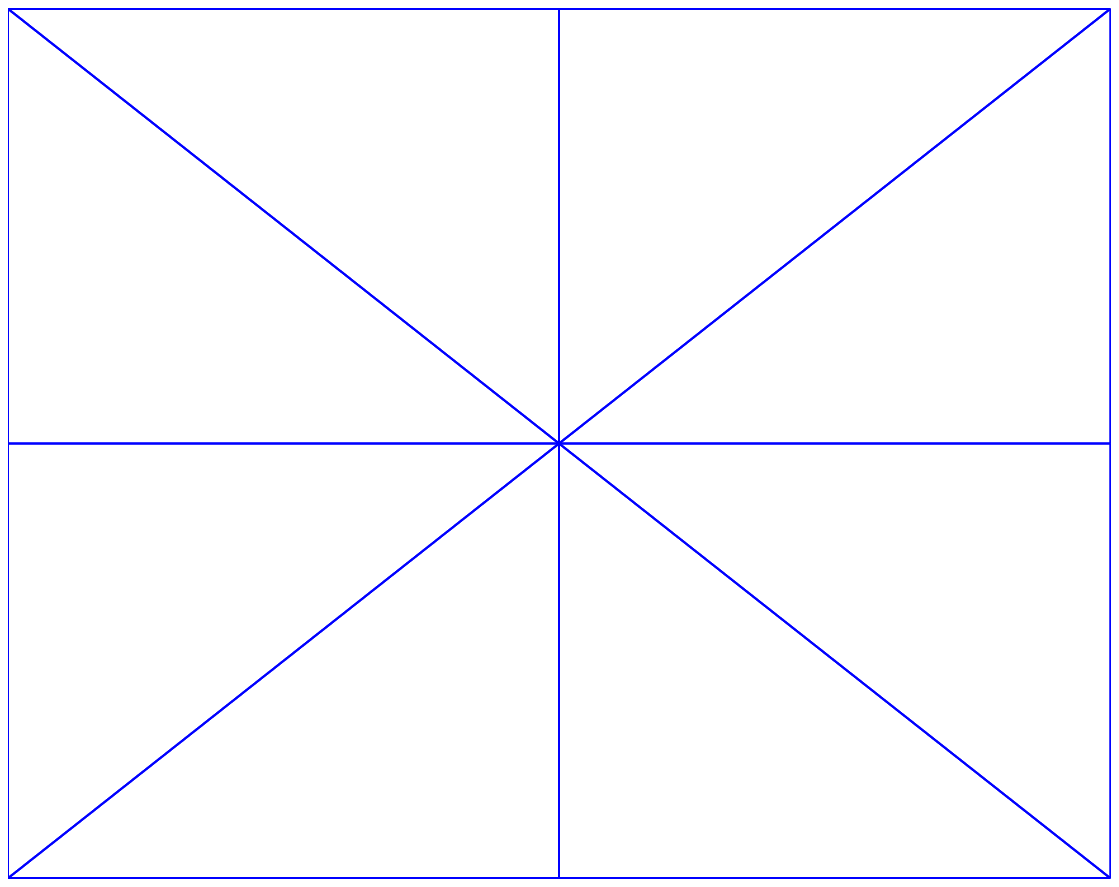}
      \includegraphics[width=0.3\linewidth]{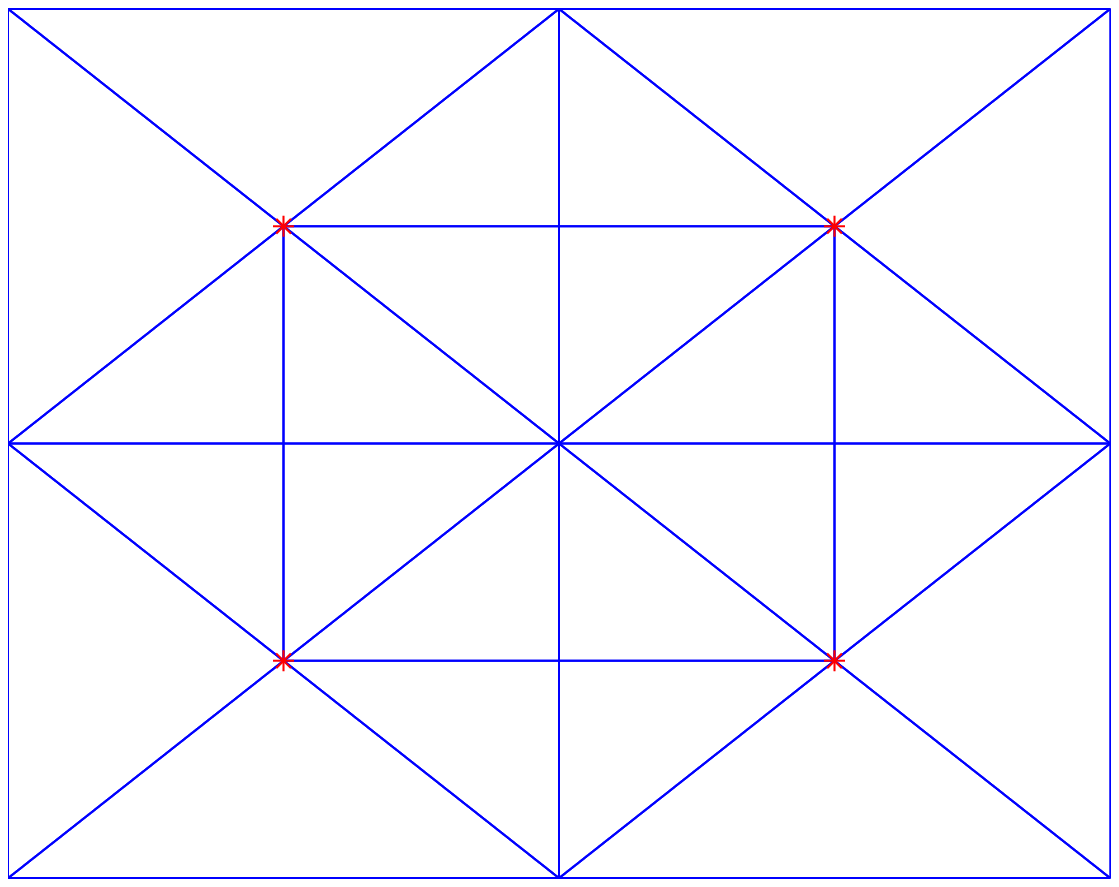}
    }
    {
      \caption{$\mathcal T_0$ (left) and $\mathcal T_1$ (right)}\label{fig:third}
    }
    \ffigbox
    {
      \includegraphics[width=0.3\linewidth]{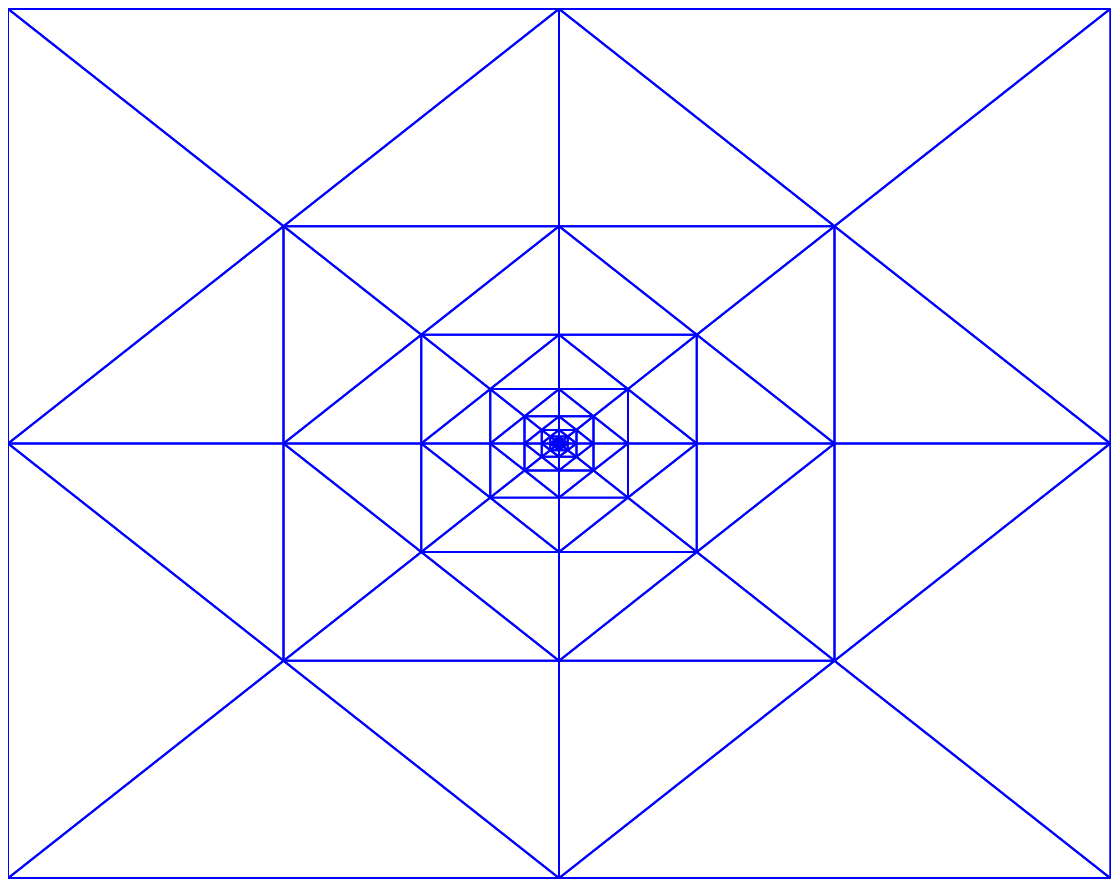}
    }
    {
      \caption{$\mathcal T_{25}$}\label{fig:fourth}
    }
  \end{floatrow}
\end{figure}
\par
\begin{table}[H]\tiny
  \begin{center}
    \caption{Numerical results for the Example 1 (dof denotes the number of degrees of freedom)}\label{tab:first}
    \begin{tabular}{p{1.pt}p{1.pt} p{3.pt} p{4.pt} p{5.pt} p{6.pt} p{8.pt} p{14.pt}}
      \toprule
      $k$ &    & $\mathcal T_5$ & $\mathcal T_6$ & $\mathcal T_7$ & $\mathcal T_8$ & $\mathcal T_9$ & $\mathcal T_{10}$ \\
      \midrule
      0 & \text{dof}   & 6080   & 24448   & 98048   & 392704 & 1571840 & 6289408  \\
      &      & 20    & 20      & 20      & 20     & 20      & 19\\
      \midrule
      1 & \text{dof}   & 12288  &49152 & 196608 & 786432 & 3145728     & 12582912     \\
      &  $\Pi_h^1$  & 54     &  57  & 59     &  60    &  61 &  62    \\
      &  $\Pi_h^2$  & 31     &  32  & 33     &  34    & 34  &  35  \\
      \bottomrule
      &&&\text{{\bf Type 3} HDG method}
    \end{tabular}
    \begin{tabular}{p{1.pt}p{1.pt} p{3.pt} p{4.pt} p{5.pt} p{6.pt} p{8.pt} p{14.pt}}
      \toprule
      $k$ &    & $\mathcal T_5$ & $\mathcal T_6$ & $\mathcal T_7$ & $\mathcal T_8$ & $\mathcal T_9$ & $\mathcal T_{10}$ \\
      \midrule
      0 & \text{dof}   & 6080   & 24448   & 98048   & 392704 & 1571840 & 6289408  \\
      &      & 20    & 20      & 20      & 20     & 20      & 19\\
      \midrule
      1 & \text{dof}   & 12288  &49152 & 196608 & 786432 & 3145728  & 12582912     \\
      &  $\Pi_h^1$ & 54     &  57  & 59     &  60    &  61      &  62    \\
      &  $\Pi_h^2$ & 31     &  32  & 33     &  34    &  34      &  35  \\
      \bottomrule
      &&&\text{{\bf Type 4} HDG method}
    \end{tabular}
  \end{center}
\end{table}
\begin{table}[H]\tiny
  \begin{center}
    \caption{Numerical results for Example 2}\label{tab:second}
    \begin{tabular}{p{1.pt} p{1.pt} p{3.pt} p{4.pt} p{5.pt} p{6.pt} p{8.pt} p{14.pt}}
      \toprule
      $k$ & & $\mathcal T_5$ & $\mathcal T_6$ & $\mathcal T_7$ & $\mathcal T_8$ & $\mathcal T_9$ & $\mathcal T_{10}$ \\
      \midrule
      0 & \text{dof} & 7568   & 30496   & 122432   & 490624   & 1964288 & 7860732  \\
      &      & 26 &  26 & 27 & 27 & 27 & 27\\
      \midrule
      1 & \text{dof} & 15360 & 61440  & 245760 & 983040 & 3932160  & 15728640   \\
      & $\Pi_h^1$  & 55    & 58     & 59     & 61     & 62  & 63   \\
      & $\Pi_h^2$  & 33    & 34     & 35     & 36     & 36  & 37   \\
      \bottomrule
      &&&\text{{\bf Type 3} HDG method}
    \end{tabular}
    \begin{tabular}{p{1.pt} p{1.pt} p{3.pt} p{4.pt} p{5.pt} p{6.pt} p{8.pt} p{14.pt}}
      \toprule
      $k$ & & $\mathcal T_5$ & $\mathcal T_6$ & $\mathcal T_7$ & $\mathcal T_8$ & $\mathcal T_9$ & $\mathcal T_{10}$ \\
      \midrule
      0 & \text{dof} & 7568   & 30496   & 122432   & 490624   & 1964288 & 7860732  \\
      &      & 26 &  27 & 27 & 27 & 27 & 27\\
      \midrule
      1 & \text{dof} & 15360 & 61440  & 245760 & 983040 & 3932160  & 15728640   \\
      & $\Pi_h^1$  & 55    & 58     & 59     & 61     & 62  & 63   \\
      & $\Pi_h^2$  & 33    & 34     & 35     & 36     & 36  & 37   \\
      \bottomrule
      &&&\text{{\bf Type 4} HDG method}
    \end{tabular}
  \end{center}
\end{table}

\begin{table}[H]\tiny
  \caption{Numerical results for Example 3}\label{tab:third experiment}
  \begin{tabular}{p{4.pt} p{3.pt}  p{10.pt} p{10.pt} p{10.pt} p{10.pt} p{10.pt} p{10.pt}}
    \toprule
    $k$&  & $\mathcal T_5$ & $\mathcal T_{10}$ & $\mathcal T_{15}$ & $\mathcal T_{20}$ & $\mathcal T_{25}$\\
    \midrule
    0  & &  14 & 14 & 14 & 14 & 14\\
    \midrule
    1  & $\Pi_h^1$&  26  & 26   &  27  &  26   & 26\\
    1  & $\Pi_h^2$ &  18  & 18 & 18 & 18 & 18\\
    \bottomrule
  \end{tabular}
\end{table}

From Tables \ref{tab:first}-\ref{tab:third experiment} we have the following observations.
\begin{itemize}
  \item  For all the examples,  the numbers of iterations in PCG are independent of the mesh size. This means the proposed preconditioners are optimal.  Besides,  the prolongation operator $\Pi_h^2$ behaves  better than   $\Pi_h^1$  in the case $k=1$.
  \item  Example 1 admits the full elliptic regularity, while Example 2
    only admits the regularity estimate $\norm{u}_{1+\alpha,\Omega}\leqslant C_{\alpha,\Omega}
    \norm{f}_{\alpha-1,\Omega}$ with $\alpha\in (0,\frac{1}{2})$.  These two examples  confirm  that the proposed BPX preconditioner is optimal.  This is conformable to Theorem \ref{thm_cond_BD}.
  \item  Example 3 confirms Theorem \ref{thm:aux_pre}, where the triangulation $\mathcal T_h$ is not quasi-uniform.
\end{itemize}

\end{document}